\newtheoremstyle{mine}
{\baselineskip}
{\baselineskip}
{\itshape}
{
}
{\bfseries}
{.}
{.5em}%{\newline}%{.5em}
{#1 #2\ifx#3\relax\else~(#3)\fi}
\theoremstyle{mine}
\newtheorem{theorem}{Theorem}
\numberwithin{theorem}{section}
\newtheorem{corollary}[theorem]{Corollary}
\newtheorem{proposition}[theorem]{Proposition}
\newtheorem{lemma}[theorem]{Lemma}
\newtheorem{definition}[theorem]{Definition}
\numberwithin{equation}{section}
\theoremstyle{remark}
\newtheorem{remark}{Remark}
\colorlet{shadecolor}{blue!10}
\renewcommand{\epsilon}{\varepsilon}
\newcommand{\R}{\mathbb{R}}
\newcommand{\Q}{\mathbb{Q}}
\def\H{\mathbb{H}}
\def\C{\mathbb{C}}
\def\P{\mathbb{P}} %{{\bf P}}
\def\E{\mathbb{E}} %{{\bf E}}
\def \eps {\epsilon}
\def\<#1{\langle #1\rangle}
\def\bi{\begin{itemize}}  %USE\bi[WHATEVER]
\def\ei{\end{itemize}}
\def\bnum{\begin{enumerate}} % USE \bnum[i)] if want i), ii) .. OR \bnum[{\bf (a)}] etc ..!
\def\enum{\end{enumerate}}
\title
[]
{
Large Deviation Principle for complex solution to squared Bessel SDE\MakeLowercase{}
}
\author{Arnab Chowdhury, Atul Shekhar}
\address[Arnab Chowdhury]
{Tata Institute of Fundamental Research-CAM, Bangalore, India}
\email{arnab2020@tifrbng.res.in}
\address[Atul Shekhar]
{Tata Institute of Fundamental Research-CAM, Bangalore, India}
\email{atul@tifrbng.res.in}
\begin{document}

\maketitle

\begin{center}
{\em }
\end{center}

\begin{abstract}
Complex solutions to squared Bessel SDEs appear naturally in relation to Schramm-Loewner evolutions. We prove a large deviation principle for such solutions as the dimension parameter tends to $-\infty$. 

\end{abstract}

\section{Introduction}\label{intro}

\subsection{Context.}
In this article we prove the Large deviation principle (LDP) for the complex solution to squared Bessel SDE. For a precise definition of a LDP and usual notions related to it, we refer to \cite{dembo2009large,deuschel2001large}. For $\delta \geq 0$, the classical $\delta$-dimensional squared Bessel process is the non-negative solution to the squared Bessel SDE 
\begin{equation}\label{RBSDE}
dX_t = 2 \sqrt{X_t}dB_t + \delta dt, \hspace{2mm} X_0=x \geq 0,
\end{equation}
where $B$ is a standard Brownian motion defined on some probability space $(\Omega, \mathcal{F}, \P)$, see [\cite{RY}-ChapterXI]. In relation to Schramm-Loewner-Evolutions (SLEs), it is natural to consider a variant of \eqref{RBSDE} for $\delta <0$ and with complex valued solutions. More precisely, for $\eta >0$ (we write $\eta =-\delta$), we consider the SDE 
\begin{equation}\label{CBSDE}
dY_t = 2 A_t dB_t - \eta dt, \hspace{2mm} Y_0= 0,
\end{equation}
where $Y_t, A_t$ are complex valued adapted processes (w.r.t. the filtration of $B$) such that $A_t^2 = Y_t$ and $Im(A_t) \geq 0$. 
Note that for upper half plane $\H:=\{x+ iy | y>0\}$, the square root function $\sqrt{z}: \C\setminus [0,\infty) \to \H$ is a conformal bijection. As such, if $Y_t \in \C\setminus [0,\infty)$ for some $t$, then $A_t = \sqrt{Y_t}$. Otherwise, if $Y_t \in [0,\infty)$, then $A_t$ makes a choice from $\pm \sqrt{|Y_t|}$ in an adapted way\footnote{If $z\in \C\setminus [0,\infty)$, we write $\sqrt{z}$ for its complex square root so that $\sqrt{z} \in \H$. If $z\in [0,\infty)$, we write $\sqrt{z}$ to mean the standard non-negative square root of $z$. Note that for $x>0$, $\pm\sqrt{x}$ are two possible limit points of $\sqrt{z}$ as $z\to x$ in $\C\setminus [0,\infty)$.}. In other words, $A$ is an adapted branch chosen from all possible square roots of $Y$. We thus refer to $A$ as a branch square root of $Y$. It is proven in \cite{MSY} that if $Y$ is any solution to \eqref{CBSDE}, then almost surely $Y_t \in \C\setminus [0,\infty)$ for all $t>0$. As such, $A_t = \sqrt{Y_t}$ for all $t >0$ and \eqref{CBSDE} is equivalent to 
\begin{equation}\label{CBSDE-2}
dY_t = 2 \sqrt{Y_t}dB_t - \eta dt, \hspace{2mm} Y_0= 0.
\end{equation}
The existence and uniqueness of strong solution to \eqref{CBSDE-2} is a consequence of the Rohde-Schramm estimate \cite{RS05}, see [\cite{MSY}-Theorem $1.5$]. In this article we prove the LDP for solutions $Y^\eta$ as $\eta \to \infty$. The corresponding LDP result for $X^\delta$ as $\delta \to \infty$ was proven in \cite{YorLDP}.\\

%(Unlike $\sqrt{|x|}$, the complex square root $\sqrt{z}$ on $\C\setminus [0,\infty)$ is not a H\"older function. Thus, the Yamada-Watanabe theorem does not apply.)
%

\subsection{Main result.}
Similarly as for $X^\delta$ in \cite{YorLDP}, we translate the LDP for $Y^\eta$ into a small noise LDP as follows. Set $\epsilon = 1/\sqrt{\eta}$ and $Z_t^\epsilon = Y_t^\eta/\eta$. Then, $Z^\eps$ solves 
\begin{equation}\label{Z-eqn}
dZ_t^\eps = -dt + 2\eps\sqrt{Z_t^\eps}dB_t, \hspace{2mm} Z_0^\eps = 0.
\end{equation}

Let $C_0([0,T], \C)= \{\varphi :[0,T] \to \C \textrm{ }\bigl | \textrm{ } \varphi \textrm{ is continuous and } \varphi_0=0\}$  be equipped with the uniform metric. We view $Z^\eps$ as a $C_0([0,T], \C)$ valued random variable and denote the law of $Z^\eps$ by $\mu^\eps$. Our main result gives the LDP for the family $\{\mu^\eps\}_{\eps >0}$. Let us first describe the LDP rate function $I$ for $\{\mu^\eps\}_{\eps >0}$. The rate function $I$ is finite for functions $\varphi$ which satisfy condition: (H1) $\varphi \in C_0([0,T], \C) $ such that $\varphi_t \in \C\setminus [0,\infty)$ for all $t>0$, (H2) $\varphi_t$ is absolutely continuous, i.e. both $Re(\varphi_t)$ and $Im(\varphi_t)$ are absolutely continuous, and (H3) $(\dot{\varphi}_t + 1)/(2\sqrt{\varphi_t})$ is real valued with $(\dot{\varphi}_t + 1)/(2\sqrt{\varphi_t}) \in  L^2([0,T], \R)$. Let $\mathcal{D}([0,T], \C) := \{ \varphi \textrm{ }\bigl | \textrm{ } \varphi \textrm{ satisfies H1, H2, H3} \}$.

\begin{theorem}\label{main-thm}
The family $\{\mu^\eps\}_{\eps >0}$ satisfies the LDP with speed $\epsilon^2$ and a good rate function $I(\cdot)$ defined by 
\begin{equation}\label{def-rate}
I(\varphi)=
    \begin{cases}
         & \int_0^T \frac{(\dot{\varphi}_t + 1)^2}{8\varphi_t}dt \textrm{ if } \varphi \in \mathcal{D}([0,T], \C), \\
        &+\infty  \textrm{ otherwise. }
    \end{cases}
\end{equation}

\end{theorem}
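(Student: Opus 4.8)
The plan is to establish the LDP via the standard two-step route: first prove the LDP for a well-behaved approximation of $Z^\eps$ where the diffusion coefficient is nice (bounded, Lipschitz, non-degenerate away from zero), using the classical Freidlin--Wentzell theory (or the Dawson--G\"artner / Azencott approach), and then transfer it to $Z^\eps$ itself by an exponential approximation argument. Since the drift in \eqref{Z-eqn} is $-1$, the process $Z^\eps$ is, to leading order, a downward translation $-t$ of $0$, perturbed by a small martingale $2\eps\int_0^t\sqrt{Z_s^\eps}\,dB_s$; on any fixed $[0,T]$ the typical value $Z_t^\eps\approx -t$ stays in $\C\setminus[0,\infty)$, so $\sqrt{Z_t^\eps}$ is a genuine analytic function of $Z_t^\eps$ there, and the degeneracy of the coefficient at the origin is confined to the single instant $t=0$. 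Concretely, for $\rho>0$ I would introduce a cutoff coefficient $\sigma_\rho(z)$ that agrees with $\sqrt{z}$ on $\{\dist(z,[0,\infty))\ge\rho\}$, is smooth and bounded, and study $Z^{\eps,\rho}$ solving $dZ_t^{\eps,\rho}=-dt+2\eps\sigma_\rho(Z_t^{\eps,\rho})\,dB_t$. For this family the contraction principle applied to the Schilder LDP for $\eps B$ yields an LDP with rate function $I_\rho(\varphi)=\tfrac12\int_0^T|h_t|^2\,dt$ over controls $h$ with $\dot\varphi_t=-1+2\sigma_\rho(\varphi_t)h_t$; when $\varphi$ stays $\rho$-away from $[0,\infty)$ after a short initial time this is exactly $\int_0^T\frac{(\dot\varphi_t+1)^2}{8\varphi_t}\,dt$, provided $(\dot\varphi_t+1)/(2\sqrt{\varphi_t})$ is real — which encodes the constraint that $h$ must be real-valued, i.e. that the complex-valued increment $\dot\varphi_t+1$ lies in the real line $\sqrt{\varphi_t}\cdot\R$.

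Next I would show the limits $\rho\to0$ match up on both sides. On the rate-function side one checks that $I_\rho\uparrow I$ as $\rho\downarrow0$ (in the sense of $\Gamma$-convergence of the $I_\rho$ to $I$, using that any $\varphi\in\mathcal D([0,T],\C)$ with finite $I(\varphi)$ is, for small times, close to the straight line $-t$, hence eventually $\rho$-separated from $[0,\infty)$, while the contribution of a short initial interval $[0,\tau]$ to $I(\varphi)$ is $o(1)$ as $\tau\to0$ because $\int_0^\tau\frac{(\dot\varphi_t+1)^2}{8\varphi_t}\,dt\to0$ by absolute continuity and integrability). On the probability side one needs the exponential approximation: for every $\delta>0$,
\begin{equation}\label{exp-approx}
\limsup_{\rho\to0}\ \limsup_{\eps\to0}\ \eps^2\log\P\!\left[\sup_{t\in[0,T]}|Z_t^\eps-Z_t^{\eps,\rho}|>\delta\right]=-\infty.
\end{equation}
Given \eqref{exp-approx} and the matching of rate functions, the general approximation lemma for LDPs (e.g. \cite[Theorem 4.2.13]{dembo2009large}) delivers the LDP for $\{\mu^\eps\}$ with good rate function $I$; goodness of $I$ (compact sublevel sets) follows from the goodness of each $I_\rho$ together with a uniform-in-$\rho$ coercivity estimate, or can be proven directly from the explicit formula by an Arzel\`a--Ascoli argument once one notes $\dot\varphi_t=-1+2\sqrt{\varphi_t}\,h_t$ with $\|h\|_{L^2}^2=2I(\varphi)$ forces equicontinuity and a uniform bound on $\sup_t|\varphi_t|$.

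The main obstacle is \eqref{exp-approx}: controlling the probability that $Z^\eps$ comes within $\rho$ of the positive real axis on a uniform-in-$\eps$ exponential scale, where $\sigma_\rho$ and $\sqrt{\cdot}$ cease to agree. This is precisely where the Rohde--Schramm estimate and the results of \cite{MSY} must be leveraged: those give quantitative control on how deeply the solution $Y^\eta$ (equivalently $Z^\eps$) penetrates toward $[0,\infty)$, and one must upgrade such control to the exponential-moment / large-deviation scale — typically by a Girsanov change of measure removing the drift, a time-change reducing \eqref{CBSDE-2} to a complex Brownian-motion-type object as in \cite{MSY}, and then an explicit hitting estimate for the imaginary part $\Im A_t=\Im\sqrt{Z_t^\eps}$ staying bounded below. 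A secondary technical point is that $\sigma_\rho$ cannot be chosen globally Lipschitz and bounded while exactly equal to $\sqrt z$ on a half-plane-like region containing the whole trajectory; one handles this by also truncating at large $|z|$ (harmless, since $\sup_{[0,T]}|Z_t^\eps|$ is exponentially tight by a Burkholder--Davis--Gundy plus Gr\"onwall estimate on the exponential scale) and by verifying that the exceptional event in \eqref{exp-approx} is the only place the modification is felt.
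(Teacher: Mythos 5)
Your strategy (cutoff coefficient $\sigma_\rho$, Freidlin--Wentzell LDP for the regularized SDE, then transfer by exponential approximation) leaves the actual crux of the problem unproved. The superexponential closeness estimate $\limsup_{\rho\to0}\limsup_{\eps\to0}\eps^2\log\P[\sup_{t\le T}|Z^\eps_t-Z^{\eps,\rho}_t|>\delta]=-\infty$ is asserted as ``the main obstacle'' and then only gestured at; but it is exactly where all the difficulty of this model sits, and the tools you invoke do not deliver it. First, the starting point $0$ lies on the branch cut $[0,\infty)$, and for small times $Z^\eps_t\approx -t$ is within distance $t$ of the cut, so the region where $\sigma_\rho\neq\sqrt{\cdot}$ is visited with probability \emph{one} on an initial interval of length of order $\rho$: your estimate is therefore not a rare-event bound, and one must instead show that the discrepancy accumulated there stays small and does not get amplified afterwards -- but $\sqrt{\cdot}$ is not Lipschitz near the cut, so no Gr\"onwall argument controls this propagation. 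Second, for later times the needed statement is that the probability of $Z^\eps$ approaching $[0,\infty)$ within $\rho$ decays at speed $\eps^2$ with a rate diverging as $\rho\to0$; this is itself an LDP-upper-bound statement near the singular set, i.e.\ essentially the theorem one is trying to prove, so there is a circularity risk unless it is established by hand. The means you suggest do not do this: the results of \cite{MSY} and the Rohde--Schramm estimate give almost-sure statements ($Y_t\in\C\setminus[0,\infty)$, existence/uniqueness), not quantitative bounds on the $\eps^2\log$-scale, and a Girsanov removal of the drift $-dt$ fails because the compensator would involve $\eps^{-2}\int_0^T|Z^\eps_r|^{-1}dr$, which is not even finite when $Z^\eps_r\sim -r$ near $r=0$ (indeed the driftless equation started at $0$ has the trivial solution, so the two laws are not equivalent).

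There is also a secondary gap in the rate-function matching: under the cutoff dynamics $\dot\psi_t=-1+2\sigma_\rho(\psi_t)h_t$ with \emph{real} $h$, the velocity is constrained to a one-dimensional line in $\C$ depending on $\sigma_\rho(\psi_t)$, so a path $\varphi$ with $I(\varphi)<\infty$ is in general not attainable at all for the $\rho$-system near $t=0$ (where $\sigma_\rho\neq\sqrt{\cdot}$), and $I_\rho(\varphi)=+\infty$ there; building genuine recovery sequences $\psi_\rho\to\varphi$ with $\limsup_\rho I_\rho(\psi_\rho)\le I(\varphi)$ requires continuity and uniqueness of the deterministic complex Bessel flow driven by finite-energy controls -- precisely the inputs of Lemma \ref{unique} and Lemma \ref{cont-BV}, which your sketch never secures. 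For comparison, the paper's proof avoids both issues: the upper bound is obtained by weighting with the two-parameter exponential supermartingale \eqref{our-choice} and the variational identity of Proposition \ref{J=I}, so no near-cut exponential estimate is ever needed, and the lower bound uses the Cameron--Martin tilt \eqref{our-choice-2} together with Proposition \ref{req-conv}, where only convergence in probability of $Z^{\eps,h}$ to $\varphi^h$ is required and the uniqueness of \eqref{h-phi-eqn} does the work. As it stands, your proposal is a plausible program but not a proof; the estimate replacing Propositions \ref{UP} and \ref{req-conv} is missing.
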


\subsection{Motivation.}
 The process $Y^\eta$ is related to SLE$_\kappa$ as follows: for $\kappa <4$ and $\eta = 4/\kappa -1$, 

\begin{equation}\label{SLE=flow}
\{\sqrt{\kappa Y^\eta(T-t, T,0)}\}_{t\in [0,T]} \overset{d}{=} \{\gamma_t\}_{t\in [0,T]},
\end{equation}
 where $\gamma_t$ is the SLE$_\kappa$ curve and $\{Y^\eta(s,t,z)\}_{0\leq s\leq t, z \in \overline{\H}}$ is the flow of solutions obtained by solving \eqref{CBSDE-2} with the initial condition $Y_s =z \in \overline{\H}$, see [\cite{MSY}-Corollary $1.7$] for details. Our motivation to prove a LDP for $Y^\eta$ comes from the work of Y. Wang \cite{yilin-1,yilin-2,yilin-3} on the LDP for SLE$_\kappa$ as $\kappa \to 0+$. A LDP result for SLE$_\kappa$ with respect to Hausdorff metric was proven in \cite{wang-peltola}. To establish a LDP for SLE$_\kappa$ in the (stronger) uniform metric, one can utilise \eqref{SLE=flow} and reduce this problem to proving a LDP for the stochastic flow of \eqref{CBSDE-2}, see \cite{stroock-LDP-flow, ben-arous-castell-flow-LDP} for some results in that direction. A natural first step in this approach is to prove a LDP for the solution $Y^\eta$ itself which is addressed in this paper (note that $\eta \to \infty$ as $\kappa \to 0+$). We note that a LDP result for SLE$_\kappa$ with respect to uniform metric (but in an incomplete space $(S,\tau))$ has been established by V. Guskov \cite{guskov}. However, the LDP for $Y^\eta$ does not follow from results of \cite{guskov}. \\
As a corollary of Theorem \ref{main-thm}, one can obtain a large deviation estimate for the tip of SLE$_\kappa$. Using \eqref{SLE=flow}, the tip $\gamma_T^\kappa$ of SLE$_\kappa$ is given by $\sqrt{\kappa Y_T^\eta}$, where $\eta = 4/\kappa -1$. Theorem \ref{main-thm} can hence be applied to obtain a LDP for $\gamma_T^\kappa$. This can be compared to results of \cite{lawSLE-TIP} which describes the exact law of the tip $\gamma_T^\kappa$. The corresponding rate function in the LDP for $\gamma_T^\kappa$ is given by $I(z) = \inf \{ I(\varphi) \textrm{ }| \textrm{ }\varphi \textrm{ joins $0$ to $z^2$}\}$, where $z\in \H$ and $I(\varphi)$ is given by \eqref{def-rate}. In the language of \cite{yilin-1}, this is the minimum Loewner energy required for a curve to pass through $z$. This can be explicitly computed and it turns out to be $-8\log(\sin(\arg(z)))$. This was already computed in \cite{yilin-1} using probabilistic methods. A more direct deterministic proof has been given by T. Mesikepp \cite{mesikepp}. A yet another proof of this fact can also be obtained by using Euler-Lagrange equation to directly compute the minimum value $I(z) = \inf \{ I(\varphi) \textrm{ }| \textrm{ }\varphi \textrm{ joins $0$ to $z^2$}\}$ from \eqref{def-rate}. Since this computation is long and not the main point of this paper, we do not present the details here. \\ 
Theorem \ref{main-thm} is also a natural variant of LDP for squared Bessel processes $X^\delta$ as proven in \cite{YorLDP}. It follows from central limit theorem and the additive property of squared Bessel processes (cf.  [\cite{YorLDP}, Equation $(1.3)$]) that as $\delta \to \infty$, 
\begin{equation}\label{BCLT}
\biggl\{\sqrt{\delta}\biggl(\frac{X_t^\delta}{\delta} -t \biggr) \biggr\}_{0\leq t\leq T}\overset{d}{\longrightarrow} \bigl\{\sqrt{2}B_{t^2}\bigr\}_{0\leq t\leq T }.
\end{equation}

In our setting, processes $Y^\eta$ does not satisfy the additive property. Nevertheless, we can write 
\[\sqrt{\eta}\biggl(\frac{Y_t^\eta}{\eta} + t \biggr) = \frac{1}{\epsilon}\left(Z^\epsilon_t+t\right) = 2\int_0^t\sqrt{Z_r^\eps}dB_r. \]
It can be easily verified that $\{Z_t^\eps\}_{t\in [0,T]} \to \{-t\}_{t \in [0,T]}$ in $L^2(\P)$ as $\eps \to 0$. Hence, it follows that as $\eta \to \infty$, 
\begin{equation}\label{CBCLT}
\biggl\{\sqrt{\eta}\biggl(\frac{Y_t^\eta}{\eta} + t \biggr) \biggr\}_{0\leq t\leq T}\overset{d}{\longrightarrow} \biggl\{2i\int_0^t\sqrt{r}dB_r\biggr\}_{0\leq t\leq T} \overset{d}{=}\bigl\{\sqrt{2}iB_{t^2}\bigr\}_{0\leq t\leq T}.
\end{equation}
Hence, even though $Y^\eta$ does not satisfy the additivity property, we do have the above variant of CLT for $Y^\eta$. Obtaining a LDP for $Y^\eta$ is a natural next step.

\subsection{Idea of the proof of Theorem \ref{main-thm}.}

To prove Theorem \ref{main-thm}, we use the standard argument based on exponential martingales. We first show that the family $\{\mu^\eps\}_{\eps >0}$ is exponentially tight which is an easy consequence of estimates in \cite{RTZ} (Lemma \ref{U-V-bound} below). Then, we prove the weak upper and the weak lower bound ( Proposition \ref{UP} and Proposition \ref{LB} below). The weak upper bound is obtained by weighting $\P$ with the exponential martingale $M_{f,g}^\eps(Z^\eps)$ (see \eqref{our-choice} below). The choice of this appropriate martingale $M_{f,g}^\eps(Z^\eps)$ is a key observation of this paper. The weak upper bound is completed by obtaining a variational description of the rate function $I$ which in itself is a two dimensional functional optimisation problem, see Proposition \ref{J=I}. For the weak lower bound, we use the classical change of measure appearing in Cameron-Martin theorem, see \eqref{our-choice-2}. The weak lower bound \eqref{wLB} boils down to Proposition \ref{req-conv} which is another key input of this paper, see Remark \ref{good-remark}. The proof of Proposition \ref{req-conv} relies crucially on results of \cite{STW-1},  particularly on the uniqueness of solution to \eqref{h-phi-eqn}, see Lemma \ref{unique} below. The paper \cite{STW-1} is a foundation for this paper and its results are used repeatedly in several instances.

\begin{remark}

As shown in \cite{YorLDP}, the LDP for $X^\delta$ can be obtained via two other methods besides the approach using exponential martingales: $(1)$ by using an infinite dimensional Cramer's theorem approach which is based on additivity property of $X^\delta$, and $(2)$ by using contraction principle applied to Bessel processes $\sqrt{X^\delta}$, which in turn is based on the work of McKean \cite{mckean-1,mckean-2} giving the continuity of the associated It\^o map. However, these two approaches fail to apply to $Y^\eta$. The process $Y^\eta$ does not satisfy the additivity property. Also, the technique of \cite{mckean-1,mckean-2} does not apply to $\sqrt{Y^\eta}$ and the associated It\^o map is not well defined.  

\end{remark}

\subsection*{Organization of the paper.} 
In section \ref{prelim} we recall some known results which will be useful in the proof of Theorem \ref{main-thm}. Section \ref{section-thm-proof} contains the proof of Theorem \ref{main-thm}.

\subsection*{Acknowledgements.}
A.S. wishes to thank Yilin Wang for various enlightening discussions on Large deviation for SLE, and Bangalore probability seminar group for discussing an earlier draft of this paper.  A.S. and A.C. acknowledges the support from the project PIC RTI4001: Mathematics, Theoretical Sciences and Science Education. 

\section{Preliminaries}\label{prelim}

In this section we recall some results which will be used in the proof of Theorem \ref{main-thm}.

\subsection{Cameron-Martin Perturbations.}
Let $H_0^1([0,T], \R)= \{h: [0,T] \to \R \textrm{ }\bigl | \textrm{ } h_0=0, \dot{h} \in L^2([0,T], \R) \}$ be the Cameron-Martin space equipped with the norm 
\[ ||h||_{H_0^1} = \sqrt{\int_0^T \dot{h}_r^2dr}. \]
For $h \in H_0^1([0,T], \R)$, we will need to consider $Z_t^{\eps, h}$ which are solutions to 

\begin{equation}\label{Z-h-eqn}
dZ_t^{\eps,h} = -dt + 2\sqrt{Z_t^{\eps,h}}( \eps dB_t + dh_t), \hspace{2mm} Z_0^{\eps,h} = 0.
\end{equation}
Using Girsanov theorem, $Z_t^{\eps, h}$ has same almost sure properties as $Z_t^{\eps}$. The existence and uniqueness of strong solution $Z_t^{\eps, h}$ to \eqref{Z-h-eqn} follows similarly as for \eqref{Z-eqn}. We will write $\sqrt{Z_t^{\eps,h}} = U_t^{\eps,h} + iV_t^{\eps,h}$ and $\sqrt{Z_t^\eps} = U_t^\eps + iV_t^\eps$.

\begin{lemma}[Lemma $2.1$ in \cite{RTZ}]\label{U-V-bound}
For $U_t^{\eps,h}, V_t^{\eps,h}$ as above, we have 
\[|U_t^{\eps,h}| \leq 2\sup_{s\in [0,t]}(\eps|B_s| + |h_s|),\]
and
\[V_t^{\eps,h} \leq \sqrt{(\eps^2 + 1)t}.\]
\end{lemma}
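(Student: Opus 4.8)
\textbf{Proof proposal for Lemma \ref{U-V-bound}.}

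The plan is to derive both bounds by writing down the SDEs satisfied by the real and imaginary parts $U^{\eps,h}_t$ and $V^{\eps,h}_t$ of $\sqrt{Z^{\eps,h}_t}$, and then analyzing them by hand. First I would apply It\^o's formula to the conformal map $z \mapsto \sqrt{z}$ (which is holomorphic on $\C \setminus [0,\infty)$, and recall from \cite{MSY} that $Z^{\eps,h}_t \in \C\setminus[0,\infty)$ for $t>0$ almost surely, so the composition is legitimate). Writing $A_t = \sqrt{Z^{\eps,h}_t} = U_t + iV_t$, the process $A$ solves an SDE of the form $dA_t = \tfrac{1}{2A_t}\, dZ^{\eps,h}_t - \tfrac{1}{8 A_t^3}\, d\langle Z^{\eps,h}\rangle_t$; since $d\langle Z^{\eps,h}\rangle_t = 4|A_t|^2(\eps^2 + \dot h_t^2)\,dt \cdot (\text{up to the }dh\text{ correction})$ — more precisely $dZ^{\eps,h}_t = -dt + 2A_t(\eps\, dB_t + dh_t)$ so the quadratic variation is $4|A_t|^2\eps^2\,dt$ — substituting and simplifying the drift produces a clean linear-in-$A$ diffusion term plus a drift term that can be read off in real and imaginary parts. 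I expect (this is the computation already implicit in the cited \cite{RTZ}) that the imaginary part satisfies something like $dV_t = \eps\, d\tilde B_t$-type martingale part with a \emph{non-positive} drift, while the real part has a bounded (in terms of $\sup_s(\eps|B_s| + |h_s|)$) excursion structure.

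For the bound on $V^{\eps,h}_t$: once the SDE for $V$ is in hand, I would check that its drift is $\le 0$ (this should fall out of the $-\eta\,dt$ / $-dt$ term in the original equation, which is precisely the feature distinguishing this from the classical Bessel case) and that its martingale part has quadratic variation bounded by $(\eps^2+1)t$ — the $\eps^2$ coming from the Brownian term and the $1$ from the finite-variation term $dh$ after a Cauchy--Schwarz / time-change estimate, or alternatively one bounds $V_t^2$ directly via It\^o and takes expectations. Actually the cleanest route is: $V_t = \Im\sqrt{Z^{\eps,h}_t}$ and $V_t^2 \le |A_t|^2 = |Z^{\eps,h}_t|$, but that is too lossy; instead one should track $V_t$ itself, use that $d\langle V\rangle_t \le (\eps^2+1)\,dt$ (combining the $\eps\,dB$ and $dh$ contributions, where $\int_0^t \dot h_r^2\,dr$ is controlled — wait, $h$ need not have bounded derivative pointwise), so one truly needs the martingale-plus-nonpositive-drift structure and then a comparison/Gronwall argument to conclude $V_t \le \sqrt{(\eps^2+1)t}$ deterministically.

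For the bound on $|U^{\eps,h}_t|$: here I would use that $U_t = \Re\sqrt{Z^{\eps,h}_t}$ changes sign only when $Z^{\eps,h}$ crosses the imaginary axis (i.e. when $\Re Z^{\eps,h}_t = 0$), and between such times $U$ behaves like a nice semimartingale whose increments are controlled by the driving noise $\eps\, dB + dh$; the factor $2$ and the $\sup_{s\le t}(\eps|B_s| + |h_s|)$ should emerge from integrating $dU_t$, which has the driving term essentially $\eps\, dB_t + dh_t$ (the $1/(2A_t)$ times $2A_t$ cancels the modulus). The key structural input — and the main obstacle — is correctly handling the boundary behavior at $[0,\infty)$ and at $0$: a priori the It\^o formula for $\sqrt{\cdot}$ could pick up a local-time term on the real axis, and one must invoke the result of \cite{MSY} (or \cite{RS05}) that $Z^{\eps,h}$ avoids $[0,\infty)$ for positive times, plus the Girsanov equivalence stated just above the lemma, to rule this out and to transfer the estimate from $Z^\eps$ to $Z^{\eps,h}$. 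Since this is quoted verbatim as Lemma 2.1 of \cite{RTZ}, I would in fact present the short version: cite \cite{RTZ} for the computation, and only spell out the It\^o expansion and the sign of the drift, which are the two points a reader needs to see.
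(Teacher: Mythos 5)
The paper itself offers no proof of this lemma --- it is quoted verbatim from [RTZ] --- so citing that reference, as you propose at the end, is exactly what the authors do. The problem is with the sketch you say you would ``spell out'': its structural claims about $V^{\eps,h}$ are wrong, and following them would not produce the stated bound. Carrying out the It\^o computation you begin with, one gets, for $A_t=\sqrt{Z^{\eps,h}_t}=U_t+iV_t$ and $t>0$,
\begin{equation*}
dA_t \;=\; \eps\, dB_t + dh_t \;-\; \frac{1+\eps^2}{2A_t}\,dt,
\qquad\text{hence}\qquad
dU_t = \eps\,dB_t + dh_t - \frac{(1+\eps^2)U_t}{2|A_t|^2}\,dt,
\quad
dV_t = \frac{(1+\eps^2)V_t}{2|A_t|^2}\,dt .
\end{equation*}
Since the driving noise $\eps\,dB_t+dh_t$ is \emph{real}, it enters only $U$: the imaginary part $V$ has \emph{no} martingale part at all, and its drift is \emph{non-negative} (so $V$ is nondecreasing), not non-positive as you assert. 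Consequently there is no Gronwall/comparison argument to run and no issue about $\dot h$ being unbounded: the bound is the elementary deterministic estimate $\frac{d}{dt}(V_t^2)=\frac{(1+\eps^2)V_t^2}{U_t^2+V_t^2}\le 1+\eps^2$, giving $V_t\le\sqrt{(1+\eps^2)t}$. Note also that the ``$+1$'' comes from the $-dt$ drift of the $Z$ equation (the term $-\frac{1}{2A_t}dt$) and the $\eps^2$ from the It\^o correction; the $dh$ term contributes nothing to $V$, contrary to your attribution.

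For the $U$ bound your sketch stops at ``should emerge''; the actual argument is that the drift $-\frac{(1+\eps^2)U_t}{2|A_t|^2}$ has sign opposite to $U_t$, so writing $W_t=\eps B_t+h_t$ and taking $\tau$ the last time before $t$ at which $U$ vanishes (it does vanish, since $U_0=0$), one gets on $(\tau,t]$ that $|U_t|\le |W_t-W_\tau|\le 2\sup_{s\le t}|W_s|\le 2\sup_{s\le t}(\eps|B_s|+|h_s|)$, which is where the factor $2$ comes from. (Minor point: $U$ vanishes when $Z^{\eps,h}$ hits $(-\infty,0]$, not the imaginary axis.) So either cite [RTZ] outright, or present the computation above; the version in your proposal, as written, misdescribes the equation for $V$ and would fail.
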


We will also need to consider solutions $\varphi^h$ which solves 
\begin{equation}\label{h-phi-eqn}
d\varphi^h_t = -dt + 2 A_t dh_t, \hspace{2mm}\varphi^h_0 =0,
\end{equation}
where $A_t$ is $\overline{\H}$-valued measurable function such that $A_t^2 = \varphi_t^h$, i.e. $A_t = A_t(\varphi^h)$ is a branch square root of $\varphi^h$ similarly as described in Section \ref{intro}. Following results from \cite{STW-1} are crucial inputs in the proof of Theorem \ref{main-thm}.

\begin{lemma}[Proposition $2.6$ in \cite{STW-1}]\label{unique}
Let $h \in H_0^1([0,T], \R)$. For any solution $(\varphi^h, A(\varphi^h))$ of \eqref{h-phi-eqn}, $A_t \in \C \setminus [0,\infty)$ for all $t>0$. Hence, $A_t = \sqrt{\varphi_t^h}$ and \eqref{h-phi-eqn} is equivalent to 
\begin{equation}\label{h-phi-eqn-2}
d\varphi^h_t = -dt + 2  \sqrt{\varphi_t^h} dh_t, \hspace{2mm}\varphi^h_0 =0.
\end{equation}
Furthermore, 
\[ \liminf_{t\to 0+} \frac{Im(\sqrt{\varphi_t^h})}{\sqrt{t}} >0,\]

and \eqref{h-phi-eqn-2} has a unique solution $\varphi^h$\footnote{We in fact only need that $h$ is bounded variation and it satisfies a slowpoint condition, see \cite{STW-1}. It can be easily checked that Cameron-Martin functions $h$ satisfy this slowpoint condition.}.

\end{lemma}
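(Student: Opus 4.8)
The plan is to prove Lemma~\ref{unique} in three stages, closely following the strategy of \cite{STW-1} for equation \eqref{CBSDE-2} but in the deterministic (Cameron--Martin driven) setting. First I would establish that any solution pair $(\varphi^h, A(\varphi^h))$ satisfies $A_t \in \C\setminus[0,\infty)$ for all $t>0$, which immediately upgrades \eqref{h-phi-eqn} to \eqref{h-phi-eqn-2}. Writing $A_t = U_t + iV_t$ with $V_t \geq 0$, one computes from $A_t^2 = \varphi_t^h$ and \eqref{h-phi-eqn} that $d(U_t^2 - V_t^2) = \Re(d\varphi_t^h) = -dt + 2U_t\,dh_t$ and $d(U_tV_t) = \tfrac12 \Im(d\varphi_t^h) = V_t\,dh_t$. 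From the Loewner-type/It\^o--F\"ollmer chain rule applied to $A_t = \sqrt{\varphi_t^h}$ away from the positive real axis, one derives an autonomous pair of ODEs for $U_t, V_t$ of the form $\dot U_t = \tfrac{-U_t}{2(U_t^2+V_t^2)} + \dot h_t$ and $\dot V_t = \tfrac{V_t}{2(U_t^2+V_t^2)}$, valid as long as $(U_t,V_t)\neq(0,0)$ and $V_t>0$. The sign of the drift in the $V$-equation is the crucial structural fact: $V_t$ can never decrease to $0$, so once $V_t>0$ it stays positive, and $A_t$ stays in $\H \subset \C\setminus[0,\infty)$. The delicate point is the behaviour at $t=0+$, where $\varphi_0^h = 0$ and the ODEs are singular.

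For the second stage I would analyze the $t\to 0+$ behaviour and prove $\liminf_{t\to 0+} \Im(\sqrt{\varphi_t^h})/\sqrt t > 0$. Here the idea is that near $t=0$ the driving term $2\sqrt{\varphi_t^h}\,dh_t$ is lower order compared to the $-dt$ drift, since $\dot h \in L^2$ forces $h_t = o(\sqrt t)\cdot$ (more precisely $|h_t - h_s| \leq \|h\|_{H_0^1}\sqrt{t-s}$), so $\varphi_t^h \approx -t$ to leading order and $\sqrt{\varphi_t^h} \approx i\sqrt t$. To make this rigorous I would introduce the rescaled variables $u_t = U_t/\sqrt t$, $v_t = V_t/\sqrt t$ and show via a Gronwall/bootstrap argument on the corresponding equations that $(u_t, v_t) \to (0,1)$ as $t\to 0$; the slowpoint condition on $h$ (which Cameron--Martin functions satisfy, as noted in the footnote, because $|h_t| \leq \|h\|_{H_0^1}\sqrt t$) is exactly what controls the perturbation $\dot h_t$ at this scale. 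This also supplies the initial data needed to launch the uniqueness argument.

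The third stage is uniqueness of the solution to \eqref{h-phi-eqn-2}. Away from $t=0$ the vector field $(U,V)\mapsto (\tfrac{-U}{2(U^2+V^2)}, \tfrac{V}{2(U^2+V^2)})$ is locally Lipschitz on $\{V>0\}$ (and on $\C\setminus[0,\infty)$ generally, since $\sqrt{\cdot}$ is smooth there), so on any interval $[s,T]$ with $s>0$ standard ODE uniqueness applies once we know $\varphi_s^h \in \C\setminus[0,\infty)$, which stage one gives. The only subtlety is gluing this to $t=0$: two solutions starting from $0$ must be compared on $(0,s]$, and here the quantitative asymptotics $\varphi_t^h = -t(1 + o(1))$ from stage two — together with a contraction estimate in the rescaled variables on a small interval $(0,s_0]$ — forces them to coincide near $0$, after which ODE uniqueness on $[s_0, T]$ finishes the job. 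I expect the main obstacle to be precisely this stage-two analysis of the singular behaviour at the origin: one must carefully track the interplay between the $-dt$ drift, the singular $1/(U^2+V^2)$ nonlinearity, and the $L^2$ (slowpoint) regularity of $h$, all uniformly as $t\to 0$. Once the origin is understood, the rest is essentially classical ODE theory. Throughout, I would cite Proposition~2.6 of \cite{STW-1} for the detailed estimates rather than reproduce them.
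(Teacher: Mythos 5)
This lemma is not proved in the paper at all: it is imported verbatim from [\cite{STW-1}, Proposition 2.6], with the footnote recording that only bounded variation plus a slowpoint condition on $h$ is needed. So the paper's ``proof'' is the citation, and your closing sentence (``cite Proposition 2.6 of \cite{STW-1} for the detailed estimates'') is in fact exactly what the paper does; in that sense your treatment is consistent with the source. Your reconstruction of the underlying argument is also structurally sound: writing $A_t=U_t+iV_t$, the identities $d(U_t^2-V_t^2+t)=2U_t\,dh_t$, $d(U_tV_t)=V_t\,dh_t$ and the ODEs $\dot U_t=-U_t/(2(U_t^2+V_t^2))+\dot h_t$, $\dot V_t=V_t/(2(U_t^2+V_t^2))$ are correct (the first two are the same identities the paper uses later in the proof of Proposition \ref{J=I}), and the monotonicity of $V$ is indeed the structural reason the solution cannot return to $[0,\infty)$ once it has left it.

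However, as a standalone proof your proposal has a genuine gap, which you partly acknowledge: everything that makes the lemma nontrivial is concentrated at $t=0+$, and that is precisely the part you defer. Stage one, as written (``once $V_t>0$ it stays positive''), presupposes that $V$ becomes positive at all; the actual content of the first assertion is that no solution of \eqref{h-phi-eqn} can linger on $[0,\infty)$ after time $0$ using the real branch choices $A_t=\pm\sqrt{|\varphi_t|}$, and this must be ruled out by a quantitative estimate of the type $\sup_{r\le t}|\varphi_r|\lesssim t$ and hence $\varphi_t=-t(1+o(1))$, obtained from $|h_t-h_s|=o(\sqrt{t-s})$. On that last point your justification of the slowpoint condition is slightly off: the bound $|h_t-h_s|\le \|h\|_{H_0^1}\sqrt{t-s}$ alone is not enough if the norm is large when the slowpoint condition carries a quantitative threshold; what one should use is that $|h_t-h_s|\le\bigl(\int_s^t\dot h_r^2\,dr\bigr)^{1/2}\sqrt{t-s}=o(\sqrt{t-s})$ by absolute continuity of the integral of $\dot h^2$. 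The same origin asymptotics are what both the $\liminf_{t\to0+}\Im(\sqrt{\varphi_t^h})/\sqrt t>0$ statement and the uniqueness through the singular initial point require, and your ``Gronwall/bootstrap'' and ``contraction in rescaled variables'' steps are named but not carried out. Since these are exactly the estimates of [\cite{STW-1}], your proposal is acceptable as a gloss on the citation (matching the paper), but it should not be presented as an independent proof of Lemma \ref{unique}.
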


\begin{lemma}[Proposition $3.1$ in \cite{STW-1}] \label{cont-BV}
Let $h_n, h \in H_0^1([0,T], \R)$ such that $h_n \to h$ uniformly as $n\to \infty$. Further assume that $\sup_n ||h_n||_{H_0^1} <\infty$. Then, $\varphi^{h_n}$ converges to $\varphi^h$ uniformly. 
\end{lemma}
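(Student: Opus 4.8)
\textbf{Proof proposal for Lemma \ref{cont-BV}.}

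The plan is to use the uniqueness statement from Lemma \ref{unique} together with a compactness argument. First I would establish uniform control on the candidate solutions: by Lemma \ref{U-V-bound} (applied with $\eps = 0$), the functions $\varphi^{h_n}$ are uniformly bounded on $[0,T]$, since $|U_t^{0,h_n}| \le 2\sup_{s}|h_{n,s}|$ and $V_t^{0,h_n} \le \sqrt{T}$, and $\sup_n\|h_n\|_{H_0^1}<\infty$ gives a uniform bound on $\sup_s|h_{n,s}|$ via Cauchy--Schwarz. Next I would argue equicontinuity: from \eqref{h-phi-eqn-2}, $\varphi^{h_n}_t - \varphi^{h_n}_s = -(t-s) + 2\int_s^t \sqrt{\varphi^{h_n}_r}\,dh_{n,r}$, and the integral term is controlled by $2\sup_r|\sqrt{\varphi^{h_n}_r}| \cdot \int_s^t |\dot h_{n,r}|\,dr \le C\,(t-s)^{1/2}\|h_n\|_{H_0^1}$, uniformly in $n$. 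Hence $\{\varphi^{h_n}\}$ is precompact in $C_0([0,T],\C)$ by Arzel\`a--Ascoli.

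Then I would take an arbitrary subsequential uniform limit $\psi$ of $(\varphi^{h_n})$ and show it solves \eqref{h-phi-eqn-2} with drift $h$. This is where I would pass to the limit in the integral equation. Writing $\varphi^{h_n}_t = -t + 2\int_0^t \sqrt{\varphi^{h_n}_r}\,dh_{n,r}$ and integrating by parts to move the derivative off $h_n$ (so that only uniform convergence of $h_n \to h$, not $L^2$ convergence of derivatives, is needed pointwise), one gets an expression of the form $\varphi^{h_n}_t = -t + 2\sqrt{\varphi^{h_n}_t}\,h_{n,t} - 2\int_0^t h_{n,r}\, d\!\left(\sqrt{\varphi^{h_n}_r}\right)$; the last term needs care since $\sqrt{\varphi^{h_n}}$ has bounded variation on $[0,T]$ (being itself $U^{0,h_n}+iV^{0,h_n}$, a solution of the associated SDE with $\eps=0$, hence of finite variation controlled by $\|h_n\|_{H_0^1}$). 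Using the uniform bound on this variation and dominated convergence, one passes to the limit and identifies $\psi = \varphi^h$ as a solution of \eqref{h-phi-eqn} (with some measurable branch square root $A$). By the uniqueness part of Lemma \ref{unique}, $\psi = \varphi^h$. Since every subsequential limit is the same, the full sequence $\varphi^{h_n}$ converges uniformly to $\varphi^h$.

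The main obstacle is the limiting step: ensuring that the stochastic/Stieltjes integral $\int_0^t \sqrt{\varphi^{h_n}_r}\,dh_{n,r}$ converges to $\int_0^t \sqrt{\varphi^h_r}\,dh_r$ when only $h_n \to h$ uniformly (with bounded $H_0^1$-norm) and $\varphi^{h_n}\to \varphi^h$ uniformly — weak convergence of the signed measures $dh_n$ alone is not enough to integrate against a merely uniformly-convergent integrand, which is precisely why the integration-by-parts reformulation, combined with the uniform bounded-variation bound on $\sqrt{\varphi^{h_n}}$ coming from Lemma \ref{U-V-bound}, is essential. A secondary subtlety is confirming that any subsequential limit is a genuine solution in the sense of \eqref{h-phi-eqn} (verifying $A_t^2 = \psi_t$ and $\mathrm{Im}(A_t)\ge 0$ survive the limit), after which Lemma \ref{unique} closes the argument.
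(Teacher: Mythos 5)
Your overall skeleton (a priori bounds, Arzel\`a--Ascoli, identification of every subsequential limit via the uniqueness in Lemma \ref{unique}) is sound, and it mirrors how the paper itself argues for the stochastic analogue, Proposition \ref{req-conv}; note, however, that the paper gives no internal proof of Lemma \ref{cont-BV} at all --- it is quoted from \cite{STW-1} (Proposition 3.1 there). The genuine gap in your attempt is the limiting step. After integrating by parts you need a bound, \emph{uniform in $n$}, on the total variation of $\sqrt{\varphi^{h_n}}=U^{0,h_n}+iV^{0,h_n}$, which you claim comes from Lemma \ref{U-V-bound}. That lemma only gives sup-norm bounds on $U$ and $V$ and says nothing about variation. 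In fact, writing $A^n=\sqrt{\varphi^{h_n}}$, equation \eqref{h-phi-eqn-2} gives $dA^n_t=dh_{n,t}-\frac{1}{2A^n_t}\,dt$, so the variation of $A^n$ on $[0,T]$ is of order $\int_0^T|\dot h_{n,t}|\,dt+\int_0^T\frac{dt}{2|A^n_t|}$. The second integral is finite for each fixed $n$ only thanks to the slowpoint-type lower bound $\liminf_{t\to0+}\mathrm{Im}(A^n_t)/\sqrt t>0$ of Lemma \ref{unique}, and that bound is qualitative: its constant depends on $h_n$, and nothing you invoke makes it uniform in $n$. So the claimed variation bound ``controlled by $\|h_n\|_{H_0^1}$'' is unjustified, and without it your dominated-convergence passage does not go through as written. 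A secondary gap: you need the limiting integrand to be a genuine branch square root of the subsequential limit $\psi$ (i.e.\ $A^2=\psi$, $\mathrm{Im}(A)\ge 0$); this is exactly what Lemma \ref{bsr-exist} supplies, but you only gesture at the issue rather than using it.

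Both problems disappear if you drop the integration by parts. Since $\sup_n\|h_n\|_{H_0^1}<\infty$ and $h_n\to h$ uniformly, one has $\dot h_n\rightharpoonup\dot h$ weakly in $L^2([0,T],\R)$. By Lemma \ref{unique} each $\varphi^{h_n}_t\in\C\setminus[0,\infty)$ for $t>0$, so by Lemma \ref{bsr-exist}, along a further subsequence $\sqrt{\varphi^{h_{n_k}}}\to A$ uniformly with $A$ a branch square root of $\psi$. Then split $\int_0^t\sqrt{\varphi^{h_{n_k}}_r}\,\dot h_{n_k,r}\,dr=\int_0^t A_r\,\dot h_{n_k,r}\,dr+\int_0^t\bigl(\sqrt{\varphi^{h_{n_k}}_r}-A_r\bigr)\dot h_{n_k,r}\,dr$: the first term converges to $\int_0^t A_r\,\dot h_r\,dr$ by weak convergence (the integrand $A\mathbf{1}_{[0,t]}$ is bounded, hence in $L^2$), and the second is at most $\|\sqrt{\varphi^{h_{n_k}}}-A\|_\infty\,\sqrt{T}\,\sup_n\|h_n\|_{H_0^1}\to 0$ by Cauchy--Schwarz. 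This identifies $\psi$ as a solution of \eqref{h-phi-eqn} driven by $h$, Lemma \ref{unique} forces $\psi=\varphi^h$, and since every subsequential limit coincides, the full sequence converges uniformly. With that replacement your compactness-plus-uniqueness strategy closes correctly.
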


\begin{lemma}[Lemma $2.4$ in \cite{STW-1}]\label{bsr-exist}
Let $\varphi^n, \varphi \in C_0([0,T], \C)$ such that $\varphi^n \to \varphi$ uniformly. Suppose for all $n$ and $t>0$, $\varphi_t^n \in \C\setminus [0,\infty)$. Then, there exists a subsequence $
\varphi^{n_k}$ and a branch square root $A= A(\varphi)$ of $\varphi$ such that $\sqrt{\varphi^{n_k}}$ converges uniformly to $A$. \\

\end{lemma}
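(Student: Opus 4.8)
The plan is to reduce the statement to a one‑dimensional sign‑selection problem, using the explicit description of the square root on $\overline{\H}$, then to fix the signs by a diagonalisation over a countable dense set, and finally to upgrade to uniform convergence by a sequential‑compactness argument on $[0,T]$.

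\emph{Reduction.} For $t>0$ we have $\varphi^n_t\in\C\setminus[0,\infty)$, so $\sqrt{\varphi^n_t}\in\H$. Writing $\sqrt{\varphi^n_t}=U^n_t+iV^n_t$, the defining relations $(\Re\sqrt z)^2-(\Im\sqrt z)^2=\Re z$ and $2\,\Re\sqrt z\,\Im\sqrt z=\Im z$ give $(\Re\sqrt z)^2+(\Im\sqrt z)^2=|z|$, hence the pointwise identities
\[
V^n_t=\Bigl(\tfrac{|\varphi^n_t|-\Re\varphi^n_t}{2}\Bigr)^{1/2},\qquad |U^n_t|=\Bigl(\tfrac{|\varphi^n_t|+\Re\varphi^n_t}{2}\Bigr)^{1/2},
\]
valid for all $t\in[0,T]$ (both sides vanish at $t=0$ since $\varphi^n_0=0$). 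The maps $z\mapsto\bigl((|z|\mp\Re z)/2\bigr)^{1/2}$ are globally $\tfrac12$‑H\"older (their argument is $1$‑Lipschitz and $x\mapsto\sqrt x$ is $\tfrac12$‑H\"older), so $\|V^n-V\|_\infty\le\|\varphi^n-\varphi\|_\infty^{1/2}\to0$ and $\bigl\|\,|U^n|-W\,\bigr\|_\infty\le\|\varphi^n-\varphi\|_\infty^{1/2}\to0$, where $V:=\bigl((|\varphi|-\Re\varphi)/2\bigr)^{1/2}$ and $W:=\bigl((|\varphi|+\Re\varphi)/2\bigr)^{1/2}$. Thus everything reduces to selecting signs for the continuous functions $U^n$ so that a subsequence converges uniformly; note $W$ is continuous, $W\ge0$, and $\{W=0\}=\{t:\varphi_t\in(-\infty,0]\}$ is closed.

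\emph{Sign selection.} Since $|\sqrt{\varphi^n_t}|=|\varphi^n_t|^{1/2}$ is bounded uniformly in $n,t$ (as $\varphi^n\to\varphi$ uniformly), a diagonal argument over a countable dense $D\subset[0,T]$ gives a subsequence (not relabelled) along which $\sqrt{\varphi^n_t}\to A_t$ for every $t\in D$, with $A_t\in\overline{\H}$ and $A_t^2=\varphi_t$. Let $G:=\{W>0\}$, open in $[0,T]$, and write $G=\bigsqcup_k I_k$ with $I_k$ relatively open intervals (none containing $0$, as $W_0=0$). On any compact $[a,b]\subset I_k$ one has $\min_{[a,b]}W>0$, so for all large $n$ the continuous function $U^n$ is nowhere zero on $[a,b]$, hence of constant sign there; comparing with the limit $\Re A_{t_j}$ (which is nonzero since $|\Re A_{t_j}|=W_{t_j}>0$) at any $t_j\in D\cap[a,b]$ forces all $\Re A_{t_j}$, $t_j\in D\cap I_k$, to share one sign $\sigma_k\in\{\pm1\}$, and shows $U^n\to\sigma_kW$ uniformly on compact subsets of $I_k$. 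Define the candidate limit $A_t:=\sigma_kW_t+iV_t$ for $t\in I_k$ and $A_t:=iV_t$ for $t\in[0,T]\setminus G$; then $|\Re A_t|\le W_t$ for all $t$.

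\emph{Uniform convergence, and the main obstacle.} It remains to prove $U^n\to\Re A$ uniformly, after which $\sqrt{\varphi^n}=U^n+iV^n\to A$ uniformly, which forces $A$ to be continuous with $A^2=\lim_n\varphi^n=\varphi$ and $\Im A=V\ge0$, i.e.\ $A$ is a branch square root of $\varphi$. If uniform convergence failed there would be $\delta>0$ and $s_n\to s^\ast$ with $|U^n_{s_n}-\Re A_{s_n}|\ge\delta$. If $W_{s^\ast}=0$, then $|U^n_{s_n}-\Re A_{s_n}|\le|U^n_{s_n}|+W_{s_n}\le\bigl\|\,|U^n|-W\,\bigr\|_\infty+2W_{s_n}\to0$, a contradiction. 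If $W_{s^\ast}>0$, then $s^\ast$ lies in some $I_k$ and $s_n$ lies, for large $n$, in a fixed compact neighbourhood of $s^\ast$ inside $I_k$, on which $U^n\to\sigma_kW=\Re A$ uniformly by the previous step — again a contradiction. The delicate point is exactly this last step: the two branches $\pm W$ collide on the closed set $\{W=0\}$, so one must rule out sign oscillation of $U^n$ across it; this is handled by the structural facts that $U^n$ is continuous (so it cannot pass from one branch to the other without vanishing) and that $|U^n|\to W$ uniformly \emph{up to and on} $\{W=0\}$ (so $|U^n|$ and $|\Re A|$ are both uniformly small there), while away from $\{W=0\}$ the signs are locally constant and are pinned down once and for all by the diagonal subsequence.
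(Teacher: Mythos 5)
Your argument is correct, and it is worth noting that this paper does not actually prove the lemma --- it is imported verbatim from \cite{STW-1} (Lemma $2.4$ there) --- so what you have produced is a self-contained substitute rather than a parallel of an in-paper proof. Your route (extract the continuous data $V^n\to V$ and $|U^n|\to W$ by the $\tfrac12$-H\"older estimate, pin the sign of $U^n$ on each component of $\{W>0\}$ through a diagonal subsequence over a dense set, and handle the collision set $\{W=0\}$ by the bound $|U^n_{s_n}-\Re A_{s_n}|\le \bigl\|\,|U^n|-W\,\bigr\|_\infty+2W_{s_n}$) checks out at every step: the only delicate points --- well-definedness of $\sigma_k$, continuity/measurability of the glued limit $A$, and the fact that $A^2=\varphi$ and $\Im A\ge 0$ follow once uniform convergence is in hand --- are all addressed. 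A shorter route to the same conclusion, closer in spirit to the cited source, is via Arzel\`a--Ascoli: uniform convergence of the continuous $\varphi^n$ makes the family $\{\varphi^n\}$ equicontinuous, hence so are $|U^n|$ and $V^n$ by your H\"older estimate; and since $U^n$ is continuous and can change sign only through a zero, one gets $|U^n_t-U^n_s|\le 2\,\omega_{|U^n|}(|t-s|)$ (if the sign changes, interpolate through a zero $r$ between $s$ and $t$ and use $|U^n_t|+|U^n_s|\le \bigl||U^n_t|-|U^n_r|\bigr|+\bigl||U^n_s|-|U^n_r|\bigr|$), so $\{\sqrt{\varphi^n}\}$ is itself equicontinuous and uniformly bounded, and any uniform subsequential limit $A$ automatically satisfies $A^2=\varphi$, $\Im A\ge 0$. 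Your proof buys an explicit description of the limit branch ($A=\sigma_k W+iV$ componentwise) at the cost of the longer sign-bookkeeping; the compactness argument buys brevity but gives no description of $A$. Either is acceptable; if you keep yours, you might state explicitly at the outset that a subsequence is being fixed once and for all by the diagonal extraction, since the later contradiction argument is run along that same subsequence.
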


%\begin{proposition}\label{our-CLT}
%For $Z^\eps$, as $\eps\to 0$, 
%\[ \left(\frac{1}{\epsilon}\left(Z^\epsilon_t+t\right), t\in [0,T]\right) \overset{d}{\longrightarrow} (\sqrt 2iB_{t^2}, t\in [0,T]).\] 
%Equivalently, \eqref{CBCLT} holds. 
%\end{proposition}

\section{Proof of Theorem \ref{main-thm}} \label{section-thm-proof}

\subsection{Goodness of rate function $I$.}

Recall the rate function $I(\varphi)$ from \eqref{def-rate}. Note that $I(\varphi) <\infty$ if and only if $\varphi \in  \mathcal{D}([0,T], \C)$. Hence, for Lebesgue almost every $t$, 
\[\frac{\dot{\varphi}_t + 1}{2\sqrt{\varphi_t}} = \dot{h}_t\]
for some $h\in H_0^1([0,T], \R)$. In other words, $\varphi$ solves \eqref{h-phi-eqn-2}. Using Lemma \ref{unique}, it follows that $\varphi = \varphi^h$. Hence, $I(\varphi) <\infty$ if and only if $\varphi = \varphi^h$ for some $h \in H_0^1([0,T], \R)$. In that case, we have 
\[ I(\varphi) = I(\varphi^h) = \frac{1}{2}\int_0^T \dot{h}_r^2 dr. \]

To show that $I$ is a good rate function, we check that level sets $\{\varphi \bigl | I(\varphi) \leq L\}$ is sequentially compact for all $L\geq 0$. Let $\varphi_n \in $ be a sequence such that $I(\varphi_n) \leq L$. Then, $\varphi_n = \varphi^{h_n}$ for some $h_n \in H_0^1([0,T], \R)$ with $||h_n||_{H_0^1} \leq \sqrt{2L}$. Since $(H_0^1, ||\cdot||_{H_0^1})$ is a Hilbert space, its closed balls are weakly compact. Hence, there exists a subsequence $h_{n_k}$ converging weakly in $H_0^1$ to some $h_\infty \in H_0^1([0,T], \R)$ with $||h_\infty||_{H_0^1} \leq \sqrt{2L}$. Also, since $||h_n||_{H_0^1} \leq \sqrt{2L}$, it follows that $\sup_n ||h_n||_{1/2} <\infty$. By Arzela-Ascoli theorem, possibly along a further subsequence, $h_{n_k}$ converges uniformly to $h_\infty$. Using Lemma \ref{cont-BV}, we obtain that $\varphi^{h_{n_k}}$ converges uniformly to $\varphi^{h_\infty} \in \{\varphi \bigl | I(\varphi) \leq L\}$. This implies that $\{\varphi \bigl | I(\varphi) \leq L\}$ is compact.

\subsection{Exponential tightness. }

We prove that the family $\{\mu^\eps\}$ is tight, and it is exponentially tight as well. More precisely:

\begin{proposition}\label{p2}
For any $\alpha \in (0,1/2)$, 

\[\lim_{R \to +\infty}\limsup_{\epsilon \to 0} \epsilon^2 \log\mathbb P(||Z^\epsilon||_\alpha \ge R) =-\infty.\]
Also, for any $h \in H_0^1([0,T], \R)$,
\[ \lim_{R\to \infty} \sup_{\eps\in (0,1)} \P(||Z^{\eps,h}||_\alpha \geq R) = 0.\]

\end{proposition}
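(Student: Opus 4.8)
The plan is to prove both statements simultaneously by controlling the $\alpha$-Hölder norm of $Z^{\eps,h}$ (with $h\equiv 0$ giving the first statement) through the representation of $Z^{\eps,h}$ as a perturbed squared Bessel-type process together with the a priori bounds of Lemma \ref{U-V-bound}. Write $Z^{\eps,h}_t = -t + 2\int_0^t \sqrt{Z^{\eps,h}_s}\,(\eps\,dB_s + dh_s)$. The deterministic drift $-t$ is Lipschitz, hence contributes a bounded amount to every Hölder norm, so it suffices to control the Hölder norm of the two stochastic/Young integral terms $N_t := 2\eps\int_0^t \sqrt{Z^{\eps,h}_s}\,dB_s$ and $D_t := 2\int_0^t \sqrt{Z^{\eps,h}_s}\,dh_s$. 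By Lemma \ref{U-V-bound}, the integrand $\sqrt{Z^{\eps,h}_s} = U^{\eps,h}_s + iV^{\eps,h}_s$ satisfies $|U^{\eps,h}_s|\le 2\sup_{r\le s}(\eps|B_r|+|h_r|)$ and $V^{\eps,h}_s \le \sqrt{(\eps^2+1)T}$, so on the event $\{\eps\sup_{r\le T}|B_r| \le K\}$ the integrand is bounded by a constant $C(K,T,\|h\|_\infty)$ uniformly in $s$ and $\eps\in(0,1)$.

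First I would handle $D_t$: since $h\in H_0^1$, $\dot h\in L^2$, and $|D_t - D_s| \le 2\int_s^t |\sqrt{Z^{\eps,h}_r}|\,|\dot h_r|\,dr \le 2C(K,T,\|h\|_\infty)\,\|h\|_{H_0^1}\,|t-s|^{1/2}$ by Cauchy--Schwarz; this gives a deterministic $\tfrac12$-Hölder (hence $\alpha$-Hölder for $\alpha<1/2$) bound on the event above, with no exponential cost. Next, for the martingale term $N_t$, I would apply the Burkholder--Davis--Gundy inequality to the stopped process: for the stopping time $\tau_K = \inf\{t: \eps|B_t| \ge K\}\wedge T$, the process $N_{t\wedge\tau_K}$ has quadratic variation bounded by $4\eps^2 C(K,T,\|h\|_\infty)^2\,t$, so $\E\big[\sup_{0\le s<t\le T}\frac{|N_{t\wedge\tau_K}-N_{s\wedge\tau_K}|^{2m}}{|t-s|^{2m\alpha}}\big]$ is finite for $m$ large by a Kolmogorov--Chentsov / Garsia--Rodemich--Rumsey argument, yielding $\P(\|N_{\cdot\wedge\tau_K}\|_\alpha \ge R) \le C_m(K)/R^{2m}$; combined with $\P(\eps\sup_{r\le T}|B_r| > K)$, which is $o(1)$ in $\eps$ for fixed $K$ and decays like $e^{-cK^2/\eps^2}$ uniformly, this gives the second (non-exponential, uniform-in-$\eps$) assertion by first choosing $K$ then $R$ large, and for $h\equiv 0$ an analogous argument combining the $e^{-cK^2/\eps^2}$ tail with the polynomial-in-$R$ BDG bound after optimizing $K\sim \eps R^\theta$ gives the exponential decay $\limsup_{\eps\to0}\eps^2\log\P(\|Z^\eps\|_\alpha \ge R)\to-\infty$.

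Alternatively, and perhaps more cleanly for the exponential bound, one can invoke the exponential moment / large-deviation estimates for such complex Bessel flows established in \cite{RTZ} directly (the Proposition refers to "estimates in \cite{RTZ}"): these provide a bound of the form $\P(\|Z^\eps\|_\alpha \ge R) \le \exp(-c R^\beta/\eps^2)$ for suitable $c,\beta>0$, from which both displays follow immediately (the second by noting the right-hand side is $\le \exp(-cR^\beta)$ uniformly in $\eps\in(0,1)$). The main obstacle is making the Hölder-norm control genuinely uniform in $\eps\in(0,1)$ despite the stochastic integral: the quadratic variation carries an $\eps^2$ prefactor, which is favorable, but the integrand bound $|U^{\eps,h}_s|$ itself involves $\eps\sup|B_s|$, so one must carefully track that the product $\eps^2 \cdot (\text{integrand bound})^2$ stays controlled — this is exactly where restricting to the event $\{\eps\sup_{r\le T}|B_r|\le K\}$ and paying its (exponentially small, or merely small) probability is essential, and where the precise form of Lemma \ref{U-V-bound} does the decisive work.
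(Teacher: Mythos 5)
There is a genuine gap in the main (exponential tightness) assertion. Your control of the martingale term rests on fixed-moment bounds: BDG/Kolmogorov with a fixed (large) $m$ gives at best $\P(\|N_{\cdot\wedge\tau_K}\|_\alpha \ge R) \lesssim (C(K)\,\eps/R)^{2m}$, and at the large-deviation speed this contributes $\eps^2\log(\cdot) \le \eps^2\bigl(2m\log\eps + C\bigr) \to 0$ as $\eps \to 0$ for fixed $m,K,R$ --- not a quantity tending to $-\infty$. The truncation cost fares no better under your proposed optimization: with $K \sim \eps R^\theta$ the Gaussian tail is $\P(\eps\sup_{[0,T]}|B| > K) \approx e^{-cR^{2\theta}}$, which is \emph{independent of} $\eps$, so $\eps^2\log$ of it also tends to $0$; with $K$ fixed (growing only in $R$) the truncation term is fine, but then the polynomial BDG term still only gives $0$. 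Either way your argument yields $\lim_{R}\limsup_\eps \eps^2\log\P(\|Z^\eps\|_\alpha\ge R) \le 0$, not $-\infty$. To reach the exponential scale one needs exponential moments at rate $\eps^{-2}$ (equivalently, $m \sim \eps^{-2}$ with the BDG constants tracked). This is exactly what the paper does: it applies the GRR inequality with $\Psi(x) = e^{c\eps^{-2}x}-1$, $p(x)=x^{1/2}$, reducing the problem to bounding $\eps^2\log \E\exp\bigl(c\eps^{-2}|f^\eps_t - f^\eps_s|/|t-s|^{1/2}\bigr)$, and then uses the exponential martingale inequality $\E(e^{\lambda|Y_t|}) \le 2[\E(e^{2\lambda^2 [Y]_t})]^{1/2}$. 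The decisive point, which your truncation obscures, is that for $h\equiv 0$ Lemma \ref{U-V-bound} gives $|U^\eps_r| \le 2\eps\sup_s|B_s|$ \emph{with an explicit factor $\eps$}, so the $\eps^{-2}$ in the exponent cancels exactly, leaving $\E\exp(8c^2\sup_{[0,T]}B_r^2)$, finite by Fernique for small $c$ --- no stopping time or event $\{\eps\sup|B|\le K\}$ is needed, and no optimization over $K$.

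Your fallback of citing ``exponential moment / large-deviation estimates in \cite{RTZ}'' of the form $\P(\|Z^\eps\|_\alpha \ge R) \le \exp(-cR^\beta/\eps^2)$ is not available: the paper uses only the pointwise bounds of Lemma $2.1$ of \cite{RTZ} (restated as Lemma \ref{U-V-bound}), and assuming such a H\"older-norm estimate amounts to assuming the conclusion. By contrast, your treatment of the second (non-exponential, uniform in $\eps\in(0,1)$) statement is essentially sound: the Cameron--Martin term is handled by Cauchy--Schwarz, and a truncation at fixed $K$ plus a fixed-moment BDG bound does give $\sup_{\eps\in(0,1)}\P(\|Z^{\eps,h}\|_\alpha \ge R) \to 0$; the paper instead deduces it from the same GRR estimate, which yields $\sup_{\eps\in(0,1)}\E\|Z^{\eps,h}\|_\alpha < \infty$.
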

\begin{proof}
Fix $\alpha \in (0, 1/2)$. We write $Z_t^\epsilon=-t+2 (M^\epsilon_t + i N_t^\eps)$, where 
\[ M_t^\eps = \eps \int_0^t Re(\sqrt{Z_r^\eps})dB_r  = \eps \int_0^t U_r^\eps dB_r, \textrm{ and }
N_t^\eps = \eps \int_0^t Im(\sqrt{Z_r^\eps})dB_r  = \eps \int_0^t V_r^\eps dB_r\]
are local martingales. Clearly, it suffices to prove that for $f^\eps = M^\eps, N^\eps$
\begin{equation}\label{MET}
\lim_{R \to +\infty}\limsup_{\epsilon \to 0} \epsilon^2 \log\mathbb P(||f^\epsilon||_\alpha \ge R) =-\infty.
\end{equation}

%\begin{equation}\label{NET}
%\lim_{R \to +\infty}\limsup_{\epsilon \to 0} \epsilon^2 \log\mathbb P(||N^\epsilon||_\alpha \ge R) =-\infty. 
%\end{equation}

Using the Garsia-Rumsey-Rodemich (GRR) inequality with $\Psi(x)=e^{c\epsilon^{-2}x}-1$ and $p(x)=x^{\frac{1}{2}}$, where $0 <c < 1/2$ is properly chosen constant, we obtain for $f^\eps = M^\eps$ and $f^\eps = N^\eps$  
\begin{align*}
|f^\epsilon_t-f^\epsilon_s| &\le \frac{8\epsilon^2}{c} \int_0^{|t-s|}\log\left(1+\frac{4K^\eps}{u^2}\right)d\sqrt u \leq \frac{8\epsilon^2}{c} (t-s)^{\frac{1}{2}}\left[\log(T^2 + 4K^\eps)+4\log\frac{e}{\sqrt{t-s}}\right], 
\end{align*}

where 
\[ K^\eps:= \int_0^T \int_0^T \biggl\{\exp\left(c\epsilon^{-2}\frac{|f^\epsilon_t-f^\epsilon_s|}{|t-s|^{1/2}}\right)- 1 \biggr\} dsdt.\]

It follows that  
\begin{equation}\label{K-f-bound} 
||f^\eps||_\alpha \lesssim_T  \eps^2(\log(K^\eps + 1) + 1).
\end{equation}

Hence, to obtain \eqref{MET}, it suffices to verify that 

\begin{equation}\label{goal-last} 
\limsup_{\epsilon \to 0} \epsilon^2 \log \int_0^T \int_0^T \E\biggl[\exp\left(c\epsilon^{-2}\frac{|f^\epsilon_t-f^\epsilon_s|}{|t-s|^{1/2}}\right)\biggr]dsdt <\infty.  
\end{equation}

%\[\lim_{R \to +\infty}\limsup_{\epsilon \to 0} \epsilon^2 \log\mathbb P\left(\int_0^T \int_0^T\exp\left(c\epsilon^{-2}\frac{|f^\epsilon_t-f^\epsilon_s|}{|t-s|^{1/2}}\right)dsdt \geq e^{\frac{R}{\eps^2} -1 }-1\right) =-\infty.\]

We now use a exponential martingale inequality: for any continuous local martingale $Y$ with $Y_0=0$,  $\mathbb E(e^{\lambda |Y_t|}) \le 2 [\mathbb E(e^{2\lambda^2[Y]_t})]^{1/2}$. Therefore, using Lemma \ref{U-V-bound}, we have for $f_t^\eps = M_t^\eps$ 
\begin{align*}
\hspace{-7mm}\mathbb E \left[\exp\left(c\epsilon^{-2}\frac{|M^\epsilon_t-M^\epsilon_s|}{|t-s|^{1/2}}\right)\right] &\le 
2\left[\mathbb E \left(\exp{\left(\frac{2c^2\epsilon^{-2}}{(t-s)}\int_s^t (U_r^\epsilon)^2 dr\right)}\right)\right]^{\frac{1}{2}} \leq 2\left[\mathbb E \left(\exp{\left(8c^2\sup_{r\in [0,T]} B_r^2 \right)}\right)\right]^{\frac{1}{2}}.
\end{align*}
The $c$ is chosen small enough so that the right hand side above is finite using Fernique theorem. This implies \eqref{goal-last} for  $f_t^\eps = M_t^\eps$. For  $f_t^\eps = N_t^\eps$, again using Lemma \ref{U-V-bound}, we similarly have 

\begin{align*}
\mathbb E \left[\exp\left(c\epsilon^{-2}\frac{|N^\epsilon_t-N^\epsilon_s|}{|t-s|^{1/2}}\right)\right] &\le 
2\left[\mathbb E \left(\exp{\left(\frac{2c^2\epsilon^{-2}}{(t-s)}\int_s^t (V_r^\epsilon)^2 dr\right)}\right)\right]^{\frac{1}{2}} \leq 2 \exp{\left(c^2\epsilon^{-2}(\eps^2 +1)T \right)}
\end{align*}
which implies \eqref{goal-last} for  $f_t^\eps = N_t^\eps$. \\
Also, it easily follows from \eqref{K-f-bound} and estimates above that $\sup_{\eps \in (0,1)} \E(||Z^\eps||_\alpha) <\infty$, which implies the tightness of $\{Z^\eps\}_{\eps \in (0,1)}$. The tightness of $\{Z^{\eps,h}\}_{\eps \in (0,1)}$ follows similarly.

\end{proof}

\subsection{Upper bound.}
We now prove the LDP upper bound in Theorem \ref{main-thm}. Since $Z^\eps$ is exponentially tight, it suffices to prove:

\begin{proposition}\label{UP}
For all $\varphi \in C_0([0,T], \C)$,
\begin{equation}
\lim_{r \to 0}\limsup\limits_{\epsilon \to 0} \epsilon^2 \log\mathbb P(Z^\epsilon \in B_r(\varphi) )\le-I(\varphi).
\end{equation}
\end{proposition}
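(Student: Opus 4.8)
\textbf{Plan for the proof of Proposition \ref{UP}.} The strategy is the classical exponential-tilting (Freidlin--Wentzell / Varadhan) argument. For smooth test functions $f,g \in C^1([0,T],\R)$, I would introduce the exponential martingale
\[
M_{f,g}^\eps(Z^\eps)_t = \exp\Bigl( \tfrac{1}{\eps}\!\int_0^t (f_r\, dM_r^\eps + g_r\, dN_r^\eps) - \tfrac{1}{2\eps^2}\!\int_0^t (f_r^2 (U_r^\eps)^2 + g_r^2 (V_r^\eps)^2)\,dr\Bigr),
\]
where $M^\eps, N^\eps$ are the real and imaginary parts of the martingale part of $Z^\eps$ as in the proof of Proposition \ref{p2}; this is the martingale $M_{f,g}^\eps$ referred to in the introduction. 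Since $M_{f,g}^\eps(Z^\eps)$ has expectation $1$, Chebyshev's inequality gives, for any $\varphi$,
\[
\eps^2 \log \P(Z^\eps \in B_r(\varphi)) \le \eps^2 \log \E\bigl[ \mathbf 1_{\{Z^\eps \in B_r(\varphi)\}} M_{f,g}^\eps(Z^\eps)^{-1} M_{f,g}^\eps(Z^\eps)\bigr],
\]
and one bounds $M_{f,g}^\eps(Z^\eps)^{-1}$ from below on the event $\{Z^\eps \in B_r(\varphi)\}$ by a deterministic quantity, up to an error that vanishes as $r \to 0$ and $\eps \to 0$. Using $dM_r^\eps = U_r^\eps\, dB_r$, $dN_r^\eps = V_r^\eps\, dB_r$ and $dZ_r^\eps = -dr + 2(dM_r^\eps + i\, dN_r^\eps)$, the stochastic integrals in the exponent can be rewritten (via It\^o's formula applied to suitable primitives) in terms of $Z^\eps$ itself plus bounded-variation corrections, so that on $B_r(\varphi)$ the exponent is close to a deterministic functional $\Lambda_{f,g}(\varphi)$. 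This yields
\[
\lim_{r\to 0}\limsup_{\eps \to 0} \eps^2 \log \P(Z^\eps \in B_r(\varphi)) \le -\Lambda_{f,g}(\varphi)
\]
for every admissible pair $(f,g)$, hence $\le -\sup_{f,g} \Lambda_{f,g}(\varphi) =: -J(\varphi)$.

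The second ingredient is to identify the variational functional: one must show $J(\varphi) = I(\varphi)$, i.e. that the supremum over $(f,g)$ of the tilting functional recovers the rate function in \eqref{def-rate}. This is exactly the content of Proposition \ref{J=I} cited in the introduction, and I would prove it by a two-dimensional pointwise (in $t$) Legendre-duality computation: for $\varphi \in \mathcal{D}([0,T],\C)$ one writes $\dot\varphi_t + 1 = 2\sqrt{\varphi_t}\,\dot h_t$ with $\dot h_t$ real, decomposes $\sqrt{\varphi_t} = U_t + iV_t$, and optimizes the quadratic-minus-linear expression in $(f_t, g_t)$; the constraint that $(\dot\varphi_t+1)/(2\sqrt{\varphi_t})$ be \emph{real} is what forces the imaginary part of $\dot\varphi_t + 1$ to be compensated exactly, and the optimum comes out to $\int_0^T \dot h_t^2/2\,dt = \int_0^T (\dot\varphi_t+1)^2/(8\varphi_t)\,dt$. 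When $\varphi \notin \mathcal{D}([0,T],\C)$ one shows the supremum is $+\infty$: either $\varphi$ is not absolutely continuous (and a suitably chosen oscillating $f,g$ makes $\Lambda_{f,g}(\varphi) = +\infty$), or $\varphi$ hits $[0,\infty)$ at some positive time, or the realness/$L^2$ condition (H3) fails, each of which is detected by an appropriate choice of test functions.

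Combining the two steps gives the claim. \textbf{Main obstacle.} I expect the delicate point to be the passage from the stochastic exponent to the deterministic functional $\Lambda_{f,g}(\varphi)$ uniformly over the small ball $B_r(\varphi)$: one needs to control the It\^o correction terms and the quadratic-variation term $\int_0^t(f_r^2(U_r^\eps)^2 + g_r^2(V_r^\eps)^2)\,dr$ in terms of the path $Z^\eps$ alone, without pointwise control on $U^\eps, V^\eps$ separately. Here the structural identity $(U_r^\eps)^2 + (V_r^\eps)^2$, $(U_r^\eps)^2 - (V_r^\eps)^2 = \Re Z_r^\eps$, $2U_r^\eps V_r^\eps = \Im Z_r^\eps$ (from $\sqrt{Z_r^\eps}^2 = Z_r^\eps$), together with the a priori bound $V_r^\eps \le \sqrt{(\eps^2+1)t}$ from Lemma \ref{U-V-bound}, lets one express everything continuously as a functional of $Z^\eps$; the bound on $U^\eps$ in Lemma \ref{U-V-bound} (and exponential tightness, Proposition \ref{p2}) handles the remaining uniformity, so that localizing on $B_r(\varphi)$ and sending $r \to 0$ is legitimate. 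The degeneracy of $\varphi$ near $t = 0$ (where $\sqrt{\varphi_t} \to 0$) must also be handled with care, but the lower bound $\liminf_{t\to 0+}\Im(\sqrt{\varphi_t^h})/\sqrt t > 0$ from Lemma \ref{unique} keeps the integrand $(\dot\varphi_t+1)^2/(8\varphi_t)$ integrable near the origin.
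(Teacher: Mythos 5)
Your overall architecture matches the paper's: tilt by an exponential martingale parametrised by a pair $(f,g)\in C^1([0,T],\R)$, use the fact that its expectation is at most one together with continuity of the resulting deterministic functional on $B_r(\varphi)$ to get an upper bound $-J_{f,g}(\varphi)$, then optimise over $(f,g)$ and invoke Proposition \ref{J=I}. However, the exponential martingale you wrote down is incorrect, and the error is not cosmetic. Both $M^\eps$ and $N^\eps$ are driven by the \emph{same} one-dimensional Brownian motion ($dM^\eps_r=\eps U^\eps_r dB_r$, $dN^\eps_r=\eps V^\eps_r dB_r$), so the quadratic variation of $\frac{1}{\eps}\int_0^\cdot (f_rU^\eps_r+g_rV^\eps_r)dB_r$ is $\frac{1}{\eps^2}\int_0^\cdot (f_rU^\eps_r+g_rV^\eps_r)^2dr$, which contains the cross term $2f_rg_rU^\eps_rV^\eps_r$. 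Your compensator keeps only $f_r^2(U^\eps_r)^2+g_r^2(V^\eps_r)^2$; the process you define is therefore the true stochastic exponential multiplied by $\exp\bigl(\frac{1}{\eps^2}\int_0^T f_rg_rU^\eps_rV^\eps_r\,dr\bigr)$, which is neither a martingale nor a supermartingale, so the step ``expectation equals $1$, hence Chebyshev'' collapses. Worse, dropping the cross term changes the variational problem: the pointwise optimisation in $(f,g)$ then decouples and, for $\varphi=\varphi^h$, yields $\int_0^T\dot h_r^2dr=2I(\varphi)$, i.e.\ a purported bound $-2I(\varphi)$, which is false (it would contradict the lower bound). The rank-one form $(f_rU_r+g_rV_r)^2$ — degenerate precisely because one real Brownian motion drives a complex-valued process — is what produces the correct rate $\frac12\int\dot h^2$ and is exploited in the paper through Lemma \ref{2-var}. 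The fix is to use the full stochastic exponential $\mathcal E\bigl(\frac{1}{\eps}\int_0^T(f_rU^\eps_r+g_rV^\eps_r)dB_r\bigr)$ as in \eqref{our-choice}; the cross term causes no trouble in your ``rewrite the exponent as a functional of $Z^\eps$'' step, since $2U^\eps_rV^\eps_r=\Im Z^\eps_r$ and $(U^\eps_r)^2\pm(V^\eps_r)^2$ are $|Z^\eps_r|$ and $\Re Z^\eps_r$ — identities you yourself list — and the stochastic integrals become $\frac12\int f_r\,d(\Re Z^\eps_r+r)+\frac12\int g_r\,d(\Im Z^\eps_r)$ by \eqref{3.7}, which after Riemann--Stieltjes integration by parts is exactly (not merely approximately) a functional of the path that is continuous in the uniform norm.

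Two smaller points. First, even with the corrected compensator, claiming the expectation is exactly $1$ needs an integrability argument; the paper sidesteps this by noting the positive local martingale is a supermartingale, so $\E\le 1$, which suffices. Second, your sketch of Proposition \ref{J=I} is optimistic in the hard direction: when the supremum is finite, condition (H1) (that $\varphi$ avoids $[0,\infty)$ for $t>0$) is not ``detected by a choice of test functions''; in the paper, finiteness forces, via Lemma \ref{L2-lemma} and Lemma \ref{2-var}, that $\varphi$ solves \eqref{h-phi-eqn} with some branch square root and some $h\in H_0^1$, and only then do (H1) and $\varphi=\varphi^h$ follow from the uniqueness/avoidance statement Lemma \ref{unique}. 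Since for Proposition \ref{UP} you invoke Proposition \ref{J=I} as a separate result, this does not affect the present proof, but it is the genuinely delicate part of the identification $\sup_{f,g}J_{f,g}=I$.
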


For proving the above claim, we will weight probabilities by exponential martingale $M^\eps_{f,g}$ defined by 
\begin{align}\label{our-choice}
M^{\eps}_{f,g}(Z^\epsilon)&= \mathcal E\biggl(\frac{1}{\epsilon}\int_0^T (f_rU_r^\epsilon+ g_rV_r^\epsilon) dB_r\biggr) \nonumber\\ 
&=\exp \biggl(\frac{1}{\epsilon^2}\biggl(\int_0^Tf_r\epsilon U^\epsilon_rdB_r+\int_0^Tg_r\epsilon V^\epsilon_rdB_r\biggr) \nonumber\\& \qquad \qquad -\frac{1}{2\epsilon^2}\int_0^T (f_r^2(U^\epsilon_r)^2+g_r^2(V^\epsilon_r)^2+2f_rg_rU^\epsilon_rV^\epsilon_r)dr \biggr), 
\end{align}
where $f,g \in C^1([0,T],\R)$. Note that we will need to have martingale $M_{f,g}^\eps(Z^\eps)$ to be parametrised by two functions $f,g$. This is owing to the fact that even though $B$ is real valued, $Z^\eps$ is complex valued. Since $Z^\eps$ solves \eqref{Z-eqn}, we have 
\begin{equation}\label{3.7}
d((U^\epsilon_t)^2-(V^\epsilon_t)^2+t)=2\epsilon U^\epsilon_tdB_t, \hspace{2mm} d(U^\epsilon_t V^\epsilon_t)= \eps V_t^\epsilon dB_t.
\end{equation}
Therefore,
\begin{align*}
M^{\eps}_{f,g}(Z^\epsilon) = \exp\biggl(&\frac{1}{2\epsilon^2}\biggl(\int_0^Tf_rd(Re(Z_r^\eps) + r) +\int_0^Tg_rd(Im(Z_r^\eps))\biggr)  \\&- \frac{1}{2\epsilon^2}\int_0^T (f_r^2\frac{|Z_r^\eps| + Re(Z_r^\eps)}{2}+g_r^2 \frac{|Z_r^\eps| - Re(Z_r^\eps)}{2}+f_rg_rIm(Z_r^\eps))dr\biggr).
\end{align*}
Correspondingly, for any $\xi \in C_0([0,T], \C)$, we define  
\begin{align*}
M^{\eps}_{f,g}(\xi) := \exp\biggl(\frac{1}{\epsilon^2}J_{f,g}(\xi)\biggr),
\end{align*}
where 
\begin{align*}
 J_{f,g}(\xi):= &\frac{1}{2}\biggl(\int_0^Tf_rd(Re(\xi_r) + r) +\int_0^Tg_rd(Im(\xi_r))\biggr)  \\&\qquad - \frac{1}{2}\int_0^T (f_r^2\frac{|\xi_r| + Re(\xi_r)}{2}+g_r^2 \frac{|\xi_r| - Re(\xi_r)}{2}+f_rg_rIm(\xi_r))dr. 
\end{align*}

Note that, since $f,g \in C^1([0,T],\R)$, the first two integrals appearing above is well defined for any continuous $\xi$ as a Riemann-Stieltjes integral\footnote{For continuous functions $X,Y$, the Riemann Stieltjes integral $\int X_rdY_r$ is well defined if either of $X$ or $Y$ is of bounded variation.}. Furthermore, using integration by parts formula, 
\[\int_0^Tf_rd(Re(\xi_r) + r) = f_T (Re(\xi_T) + T)-\int_0^T(Re(\xi_r) + r)df_r,\]
and
 \[ \int_0^Tg_rd(Im(\xi_r)) = g_TIm(\xi_T)-\int_0^T Im(\xi_r)dg_r.\]
 
 Therefore, for each fixed $f,g \in C^1([0,T], \R)$, the function $\xi \mapsto J_{f,g}(\xi)$ is continuous on $C_0([0,T], \C)$. We further claim that:
 
 \begin{proposition}\label{J=I}
 For each $\varphi \in C_0([0,T], \C)$, 
 \begin{equation}\label{JI-eqn}
 \sup_{f,g \in C^1([0,T], \R)} J_{f,g}(\varphi) = I(\varphi).
 \end{equation}

 \end{proposition}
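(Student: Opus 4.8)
The plan is to prove the identity $\sup_{f,g} J_{f,g}(\varphi) = I(\varphi)$ by a pointwise (in $t$) optimization of the integrand, treating the two cases $I(\varphi) < \infty$ and $I(\varphi) = \infty$ separately. First I would record the convenient reparametrization: if we write $\sqrt{\varphi_t} = u_t + i v_t$ with $v_t \geq 0$ (and, where $\varphi_t \notin [0,\infty)$, $v_t > 0$), then $\tfrac{|\varphi_t| + \Re(\varphi_t)}{2} = u_t^2$, $\tfrac{|\varphi_t| - \Re(\varphi_t)}{2} = v_t^2$, and $\Im(\varphi_t) = 2 u_t v_t$, so the quadratic term in $J_{f,g}$ is exactly $\tfrac12 \int_0^T (f_r u_r + g_r v_r)^2\, dr$. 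This is the structural reason the martingale \eqref{our-choice} was chosen this way.

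For the case $I(\varphi) < \infty$: by the discussion preceding the proposition, $\varphi = \varphi^h$ for a unique $h \in H_0^1$, so $d(\Re(\varphi_r) + r) = 2 u_r\, dh_r$ and $d(\Im(\varphi_r)) = 2 v_r\, dh_r$ (taking real/imaginary parts of \eqref{h-phi-eqn-2}), and $\dot h_r = \tfrac{\dot\varphi_r + 1}{2\sqrt{\varphi_r}}$ is real-valued in $L^2$. Substituting, $J_{f,g}(\varphi) = \int_0^T (f_r u_r + g_r v_r)\dot h_r\, dr - \tfrac12 \int_0^T (f_r u_r + g_r v_r)^2\, dr$. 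Pointwise in $r$ this is $\phi \dot h_r - \tfrac12 \phi^2$ where $\phi = f_r u_r + g_r v_r$, maximized at $\phi = \dot h_r$ with value $\tfrac12 \dot h_r^2$; hence $J_{f,g}(\varphi) \leq \tfrac12\int_0^T \dot h_r^2\, dr = I(\varphi)$ for every $f,g$. For the matching lower bound I would choose $f,g \in C^1$ so that $f_r u_r + g_r v_r$ approximates $\dot h_r$ in $L^2([0,T])$: since $v_r > 0$ for $r > 0$, one can take $g \equiv 0$ on $[\delta, T]$ and $g_r = \dot h_r / v_r$ there (after mollifying to get a $C^1$ function and after controlling the contribution near $0$ using that $v_r/\sqrt r$ is bounded below near $0$ by Lemma \ref{unique}), and absorb the rest with $f$. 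A density/continuity argument (the map $(f,g) \mapsto J_{f,g}(\varphi)$ being continuous in the relevant $L^2$ sense, and $J$ being a concave functional in $(f,g)$) then gives $\sup_{f,g} J_{f,g}(\varphi) \geq I(\varphi)$.

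For the case $I(\varphi) = \infty$: I must exhibit $f_n, g_n$ with $J_{f_n,g_n}(\varphi) \to \infty$. There are two sub-cases according to which of (H1)--(H3) fails. If $\varphi$ fails to be of the form $\varphi^h$ — e.g. if $\Re(\varphi)$ or $\Im(\varphi)$ is not absolutely continuous, or the "drift relation" $\tfrac{\dot\varphi + 1}{2\sqrt\varphi}$ is not real-valued — then the linear term $\tfrac12(\int f\, d(\Re\varphi + r) + \int g\, d(\Im\varphi))$ can be made to grow faster than the quadratic term: concretely, one picks highly oscillatory $f_n$ (resp. $g_n$) detecting the failure of absolute continuity, or picks $f_n, g_n$ testing the non-realness, so that the linear gain dominates the $O(\|f_n\|^2 + \|g_n\|^2)$ quadratic cost. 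If instead $\varphi = \varphi^h$ formally with $\dot h \notin L^2$, the same pointwise computation as above gives $\sup J = \tfrac12\int \dot h^2 = +\infty$ by truncating $\dot h$ and taking approximating $C^1$ functions. I would organize this as: $\sup_{f,g} J_{f,g}(\varphi) < \infty \implies$ $\varphi$ satisfies (H1)--(H3) with $\dot h \in L^2$, which is the contrapositive of what is needed, combined with the finite case above.

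The main obstacle I anticipate is the lower-bound construction in the finite case near $t = 0$: the natural choice $g_r = \dot h_r / v_r$ (or $f_r = \dot h_r / u_r$) need not be bounded or $C^1$ as $r \to 0$, since $v_r, u_r \to 0$, and one only controls $v_r \gtrsim \sqrt r$ from below (Lemma \ref{unique}) without a comparable control on $u_r$. Handling this requires a careful cutoff near $0$ — showing the omitted piece $\int_0^\delta \dot h_r^2\, dr \to 0$ and that the quadratic penalty incurred by the cutoff approximation is negligible — together with a mollification argument to land inside $C^1([0,T],\R)$, and then invoking continuity of $J_{f,g}(\varphi)$ in $(f,g)$ to pass to the limit. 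The bounded-variation structure of $f, g$ (so the Riemann–Stieltjes integrals and integration-by-parts identities already noted are valid) keeps all manipulations legitimate throughout.
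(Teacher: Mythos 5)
Your treatment of the case $I(\varphi)<\infty$ is essentially the paper's argument: the identity $\tfrac{|\varphi|+\Re\varphi}{2}=u^2$, $\tfrac{|\varphi|-\Re\varphi}{2}=v^2$, $\Im\varphi=2uv$ turns the quadratic term into $\tfrac12\int(fu+gv)^2$, the pointwise bound $\phi\dot h-\tfrac12\phi^2\le\tfrac12\dot h^2$ gives $\sup J\le I$, and the matching bound is obtained with $f\equiv 0$ and $g\approx \dot h/v$ away from $0$ (the paper writes $J_{0,g}(\varphi)=I(\varphi)-\tfrac12\int(\dot h - gV)^2$ and uses positivity of $V$ on $(0,T]$ plus density of $C^1$ in $L^2$); your worry about lower bounds on $u$ is moot precisely because only $g$ is used. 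So that half is fine.

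The genuine gap is in the direction $I(\varphi)=\infty\Rightarrow\sup_{f,g}J_{f,g}(\varphi)=\infty$, which you, like the paper, set up as the contrapositive. Applying the Riesz-type argument (the paper's Lemma \ref{L2-lemma}) separately to $J_{f,0}$ and $J_{0,g}$ only yields absolute continuity together with measurable $k,l$ satisfying $\Re\dot\varphi+1=k\,u^2$ and $\Im\dot\varphi=l\,v^2$ a.e.; it does \emph{not} yield (H3), i.e.\ that the drift $(\dot\varphi+1)/(2\sqrt\varphi)$ is real-valued. That requires showing that finiteness of the \emph{joint} supremum forces the compatibility $k u = l v$ a.e.\ on $\{uv\neq 0\}$, which is the content of the paper's Lemma \ref{2-var}, and your sketch ("pick $f_n,g_n$ testing the non-realness so the linear gain dominates the quadratic cost") does not supply it. The difficulty is that the quadratic penalty $(f u+g v)^2$ is a rank-one form in $(f,g)$: one can make $f,g$ individually huge while keeping $fu+gv$ small, so a naive "linear beats quadratic" heuristic is exactly the step that needs a construction (the paper's proof chooses $f\approx \tfrac{L(p-q)+p+q}{2u}$, $g\approx\tfrac{p+q-L(p-q)}{2v}$ on $\{|u|\wedge|v|\ge\eps\}$ and sends $L\to\infty$). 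A second missing step is (H1): having obtained $k u=l v$ on $\{uv\neq0\}$, one must still assemble a single $h\in H_0^1$ (with an explicit definition of $\dot h$ on the sets $\{u=0\}$ and $\{v=0\}$), observe that $\varphi$ then solves \eqref{h-phi-eqn} with the branch square root $u+iv$, and invoke the uniqueness statement of Lemma \ref{unique} to conclude $\varphi_t\in\C\setminus[0,\infty)$ for $t>0$ and $\varphi=\varphi^h$; without this, nothing rules out $\varphi$ touching $[0,\infty)$, in which case $\sqrt{\varphi_t}$ and hence (H3) are not even well defined. These two steps are the core of the paper's proof of the proposition and are absent from your proposal.
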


The proof of Proposition \ref{J=I} is postponed till section \ref{JIProof}. As a result of this, we have:
 
\begin{proof}[Proof of Proposition \ref{UP}] 
 
 Since $M^{\eps}_{f,g}(Z^\epsilon)$ is a positive local martingale, it is a supermartingale. Hence, $\E(M^{\eps}_{f,g}(Z^\epsilon)) \leq 1$. This implies that 
 \begin{align*}
\mathbb P(Z^\epsilon \in B_r(\varphi)) &= \mathbb E \left(1_{B_r(\varphi)}(Z^\epsilon)\frac{M^{\eps}_{f,g}(Z^\epsilon)}{M^{\eps}_{f,g}(Z^\epsilon)}\right) \\
& \le \sup \limits_{\xi \in B_r(\varphi)} (M^\epsilon_{f,g}(\xi))^{-1}\E(M^{\eps}_{f,g}(Z^\epsilon))\\
&\le \sup \limits_{\xi \in B_r(\varphi)} (M^\epsilon_{f,g}(\xi))^{-1}.
\end{align*}
This implies, using the continuity of $\xi \mapsto M^{\eps}_{f,g}(\xi)$, 
\begin{equation}
\lim_{r \to 0}\limsup\limits_{\epsilon \to 0} \epsilon^2 \log\mathbb P(Z^\epsilon \in B_r(\varphi) )\le -J_{f,g}(\varphi). 
\end{equation}
Minimizing the right hand side over $f,g$ and using Proposition \ref{J=I} completes the proof. 
\end{proof}

\subsection{Some analytical lemmas.}
The proof of Proposition \ref{J=I} requires following optimisation results. The following lemma is well known and it is a consequence of Riesz theorem, see [\cite{YorLDP}, Proposition $3.2$] for details.

\begin{lemma}\label{L2-lemma}
Let $\alpha, \beta \in C_0([0,T], \R)$ such that $\beta $ is non-negative. Assume that 
\begin{equation}\label{alpha-beta} \sup_{f\in C^1([0,T], \R)} \biggl \{\int_0^Tf_r d\alpha_r - \frac{1}{2} \int_0^T f_r^2\beta_r dr\biggr\} <\infty.
\end{equation}
Then $\alpha$ is a absolutely continuous function and there exists a measurable function $k:[0,T] \to \R$ such that $\int_0^T k_r^2 \beta_r dr <\infty$ and $\dot{\alpha}_t = k_t\beta_t$ Lebesgue almost everywhere. 

\end{lemma}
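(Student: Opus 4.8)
The plan is to run the standard Legendre-duality argument and then invoke the Riesz representation theorem, exactly in the spirit of [\cite{YorLDP}, Proposition $3.2$]. Throughout, for $f\in C^1([0,T],\R)$ the Riemann--Stieltjes integral $\int_0^T f_r\,d\alpha_r$ is to be read, via integration by parts (legitimate since $f\in C^1$ and $\alpha_0=0$), as $f_T\alpha_T-\int_0^T\alpha_r\dot f_r\,dr$; set $\Lambda(f):=\int_0^T f_r\,d\alpha_r$ and $\|f\|^2:=\int_0^T f_r^2\beta_r\,dr$, and let $C<\infty$ be the supremum in \eqref{alpha-beta}. First I would test the supremum against the rescaled functions $tf$, $t\in\R$, which are again in $C^1([0,T],\R)$: since $\int_0^T(tf_r)\,d\alpha_r-\frac12\int_0^T(tf_r)^2\beta_r\,dr=t\Lambda(f)-\frac{t^2}{2}\|f\|^2$, optimising over $t$ gives $\Lambda(f)^2\le 2C\|f\|^2$ whenever $\|f\|>0$, while $\|f\|=0$ forces $\Lambda(f)=0$ (otherwise $t\Lambda(f)$ would be unbounded). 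Hence $|\Lambda(f)|\le\sqrt{2C}\,\|f\|$ for every $f\in C^1([0,T],\R)$.

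Next, since $\Lambda$ is linear and dominated by the seminorm $\|\cdot\|$, it descends to a well-defined bounded linear functional on the linear subspace $\mathcal V\subseteq L^2([0,T],\beta_r\,dr)$ consisting of the $\beta_r\,dr$-equivalence classes of $C^1$ functions (the degenerate case $\|f\|=0$ above is precisely what guarantees this is well defined). Extending $\Lambda$ by uniform continuity to the closure $\overline{\mathcal V}$, which is a Hilbert space, and applying the Riesz representation theorem there (or extending to all of $L^2([0,T],\beta_r\,dr)$ by Hahn--Banach), I obtain a measurable $k:[0,T]\to\R$ with $\int_0^T k_r^2\beta_r\,dr<\infty$ such that $\Lambda(f)=\int_0^T f_r k_r\beta_r\,dr$ for all $f\in C^1([0,T],\R)$. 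By Cauchy--Schwarz together with the boundedness of the continuous function $\beta$ on $[0,T]$, $k_r\beta_r\in L^1([0,T],dr)$, so $\Phi_r:=\int_0^r k_s\beta_s\,ds$ is well defined, absolutely continuous, with $\Phi_0=0$ and $\dot\Phi_r=k_r\beta_r$ a.e.

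Finally I would identify $\alpha$ with $\Phi$. Integrating by parts in $\int_0^T f_r k_r\beta_r\,dr=\int_0^T f_r\,d\Phi_r$, the identity $\Lambda(f)=\int_0^T f_r\,d\Phi_r$ reads $f_T\alpha_T-\int_0^T\alpha_r\dot f_r\,dr=f_T\Phi_T-\int_0^T\Phi_r\dot f_r\,dr$ for all $f\in C^1([0,T],\R)$. Restricting to those $f$ with $f_T=0$ --- whose derivatives $\dot f$ range over all of $C([0,T],\R)$ (take $f_r=-\int_r^T g_s\,ds$ for arbitrary continuous $g$) --- yields $\int_0^T(\alpha_r-\Phi_r)g_r\,dr=0$ for every continuous $g$; choosing $g=\alpha-\Phi$, which is continuous, forces $\alpha\equiv\Phi$. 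Therefore $\alpha$ is absolutely continuous with $\dot\alpha_r=k_r\beta_r$ a.e., which is the assertion.

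As for difficulties, there is no deep obstacle: the only point that needs a little care is the well-definedness of $\Lambda$ as a functional on $L^2([0,T],\beta_r\,dr)$, i.e.\ that it annihilates functions that vanish $\beta_r\,dr$-a.e., and this is supplied precisely by the degenerate case of the scaling step; after that, rewriting all Riemann--Stieltjes integrals against the merely continuous $\alpha$ as Lebesgue integrals against $\dot f$ makes every remaining manipulation routine.
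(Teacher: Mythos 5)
Your argument is correct, and it is exactly the route the paper has in mind: the paper gives no proof of Lemma \ref{L2-lemma}, stating only that it follows from the Riesz representation theorem as in [\cite{YorLDP}, Proposition 3.2], and your scaling bound $|\Lambda(f)|\le\sqrt{2C}\,\|f\|$, Riesz representation in $L^2([0,T],\beta_r\,dr)$, and identification of $\alpha$ with $\int_0^\cdot k_r\beta_r\,dr$ via integration by parts is precisely that standard argument. No gaps: the well-definedness issue on the degenerate set $\{\|f\|=0\}$ and the density step $f_r=-\int_r^T g_s\,ds$ are handled correctly.
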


%\begin{proof}
%Let $\mu$ be the measure $\mu(dt) = \beta_t dt$, and consider the space $L^2(\mu)$. By replacing $f$ by $\lambda f$ in \eqref{alpha-beta} and maximising over $\lambda$, we get that for some constant $C$, 
%\[ \biggl| \int_0^Tf_r d\alpha_r\biggr| \leq C ||f||_{L^2(\mu)}\]
%for all $f\in C^1([0,T], \R)$. Since $C^1([0,T], \R)$ is dense in $L^2(\mu)$, the linear map $f \mapsto \int_0^T f_r d\alpha_r$ extends continuously to a bounded linear map from $L^2(\mu)$ to $\R$. Hence, by Riesz theorem, there exist a $k\in L^2(\mu)$ such that for all $f\in C^1([0,T], \R)$, 
%\[ \int_0^T f_r d\alpha_r  =  \int_0^T f_r k_r \beta_r dr.\]
%By choosing $f$ such that $f_T=0$ and applying integration by parts, we get 
%\[ \int_0^T \alpha_r \dot{f}_rdr =  \int_0^T \biggl(\int_0^r k_s\beta_sds\biggr) \dot{f}_rdr. \]
%This implies \[ \alpha_t = \int_0^t k_r\beta_rdr,\]
%which completes the proof. 
%
%\end{proof}

Besides the above one dimensional optimisation in $f$, we also need a two dimensional optimisation over functions $f,g$:

\begin{lemma}\label{2-var}
Let $u,v:[0,T]\to \R $ are bounded measurable functions and $p,q \in L^2([0,T], \R)$. Then, 
\begin{equation}\label{p-q} \sup_{f,g\in C^1([0,T], \R)} \int_0^T\{f_ru_rp_r + g_rv_rq_r - (f_ru_r + g_rv_r)^2 \}dr<\infty
\end{equation}
if and only if $p=q$ a.e. on the set $\{uv \neq 0\}$.

\end{lemma}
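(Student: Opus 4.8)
The plan is to separate the two-dimensional optimisation in \eqref{p-q} into two one-dimensional problems by a density/parametrization argument, reducing to the structure already handled by Lemma \ref{L2-lemma}. First I would observe that the integrand is quadratic in the pair $(f,g)$: writing $a_r = f_r u_r$ and $b_r = g_r v_r$, the expression is $\int_0^T \{a_r p_r + b_r q_r - (a_r + b_r)^2\}\,dr$. Completing the square in $(a+b)$ for fixed $r$, the pointwise supremum over the value $s = a_r + b_r \in \R$ of $s\,\tfrac12(p_r+q_r) - s^2 + \tfrac12(a_r p_r + b_r q_r) - \tfrac12(a_r p_r+b_r q_r)$ — actually more cleanly, for fixed $(a_r, b_r)$ the quadratic part only sees $a_r+b_r$, so I would first fix the "difference direction" and optimise. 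The key point: if $p = q$ a.e.\ on $\{uv \ne 0\}$, then on that set $a_r p_r + b_r q_r = (a_r+b_r) p_r$, so the integrand becomes $(a_r+b_r)p_r - (a_r+b_r)^2 \le \tfrac14 p_r^2$, and on the complement $\{uv = 0\}$ one of $u,v$ vanishes, making the corresponding term drop out; a little care shows the whole integral is bounded by $\tfrac14\|p\|_{L^2}^2 + \tfrac14\|q\|_{L^2}^2 < \infty$ (handling the sub-cases $u=0, v\ne0$ and $v=0,u\ne0$, where the surviving single-variable term $b_r q_r - b_r^2 \le \tfrac14 q_r^2$ etc.). This gives the "if" direction with no appeal to Lemma \ref{L2-lemma}.

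For the converse ("only if"), I would argue by contradiction: suppose $p \ne q$ on a subset of $\{uv\ne 0\}$ of positive Lebesgue measure. On $\{uv \ne 0\}$ I can choose $f,g$ freely (modulo the $C^1$ constraint) to make $f u$ and $g v$ approximate arbitrary $L^2$ functions $a, b$ supported there — here I need a smoothing argument: $C^1$ functions $f$ are dense enough that, since $u,v$ are bounded and measurable, the products $fu$, $gv$ can approximate any prescribed bounded measurable (hence, by truncation, any $L^2$) target on the set where $u$ resp.\ $v$ is bounded away from $0$; exhaust $\{uv\ne 0\}$ by sets $\{|u|>1/n, |v|>1/n\}$. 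Then take $a = -b = t\,\mathbf{1}_E\cdot\mathrm{sign}(p-q)$ for a set $E$ of positive measure on which $|p-q|$ is bounded below by some $c>0$ and $t\to\infty$: the quadratic term $(a+b)^2$ vanishes identically, while the linear term is $\int_E a_r(p_r - q_r)\,dr = t\int_E |p_r-q_r|\,dr \ge t\, c\,|E| \to +\infty$, contradicting \eqref{p-q}. (One must check the approximation errors stay controlled; since the $C^1$ approximants can be taken bounded, the error terms are $O(1)$ times fixed $L^2$ norms while the main term diverges linearly in $t$.)

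The main obstacle I anticipate is the density/approximation step: making precise that ranging $f$ over $C^1([0,T],\R)$ and multiplying by a merely bounded measurable $u$ yields a set of products dense enough in $L^2$ to run both directions cleanly — in particular, for the "only if" direction I need to realise a target that is $+1$ and $-1$ on two disjoint pieces of a set where $|u|,|v|$ are bounded below, and the sign structure of $u,v$ (which can oscillate wildly) forces $f,g$ to compensate, so $f,g$ themselves need not be small and I must track that the induced errors in $\int (a_rp_r+b_rq_r)\,dr$ and in $\int(a_r+b_r)^2\,dr$ do not swamp the divergence. I would handle this by first proving the statement with $u,v$ replaced by their restrictions to $\{|u|>1/n\}\cap\{|v|>1/n\}$ where $1/u, 1/v$ are bounded (so $f = a/u$, $g = b/u$ are bounded measurable and can be mollified to $C^1$ with arbitrarily small $L^2$ error), and then pass to the limit $n\to\infty$. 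Everything else is routine Cauchy–Schwarz and completing-the-square bookkeeping.
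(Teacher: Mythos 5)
Your proof is correct, and the ``if'' direction is exactly the paper's argument: split into $\{uv\neq 0\}$, where the linear term collapses to $(fu+gv)p$ and completing the square gives the pointwise bound $\tfrac14 p^2$, and $\{uv=0\}$, where the cross term vanishes and one gets $\tfrac14(p^2+q^2)$. In the converse direction your route is a genuine (and slightly cleaner) variant of the paper's. The paper argues directly: on $\{|u|\wedge|v|\geq\eps\}$ it takes $fu\approx\tfrac12\bigl(L(p-q)+p+q\bigr)$ and $gv\approx\tfrac12\bigl(p+q-L(p-q)\bigr)$ (dividing by $u,v$ there, setting $f=g=1$ elsewhere, and approximating in $L^2$ by $C^1$ functions using boundedness of $u,v$), so that $fu+gv$ stays bounded independently of $L$ while $fu-gv$ carries the factor $L$; finiteness of the supremum then forces $L\int(p-q)^2\mathbf{1}_{|u|\wedge|v|\geq\eps}$ to stay bounded, whence it is zero, and $\eps\to 0$ is handled by dominated convergence. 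You instead argue by contradiction, restrict to a single set $E\subset\{|u|,|v|>1/n\}$ of positive measure on which $|p-q|\geq c$, and choose targets with $a+b\equiv 0$, which kills the quadratic term up to mollification errors and makes the linear term grow like $t$; this avoids both the bookkeeping of the $L$-independent constant $c$ and the final $\eps\to 0$ limit, at the price of the (easy) preliminary reduction to $E$. The density step you worry about is unproblematic for exactly the reason you give: on $E$ one has $f_0=a/u$, $g_0=b/v$ bounded measurable (note the small typo $b/u$ in your text), and since $u,v$ are bounded, $L^2$-approximation of $f_0,g_0$ by $C^1$ functions transfers to $L^2$-approximation of $fu,gv$, with errors controlled by Cauchy--Schwarz for each fixed $t$. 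So both proofs are valid and rest on the same mechanism (dividing by $u,v$ where they are bounded below, $C^1$-density in $L^2$, and a diverging scalar parameter in the $p-q$ direction); yours trades the paper's two-limit computation for a contradiction with a localized, cancelling test pair.
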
 

\begin{proof}
If $p=q$ a.e. on the set $\{uv \neq 0\}$, then for almost every $r$,
\begin{align*}
&f_ru_rp_r + g_rv_rq_r - (f_ru_r + g_rv_r)^2  \\
&= (f_ru_rp_r + g_rv_rq_r - (f_ru_r + g_rv_r)^2)1_{u_rv_r \neq 0} + (f_ru_rp_r + g_rv_rq_r - (f_ru_r + g_rv_r)^2)1_{u_rv_r = 0} \\
& =(p_r(f_ru_r+ g_rv_r) - (f_ru_r + g_rv_r)^2)1_{u_rv_r \neq 0} + (f_ru_rp_r + g_rv_rq_r - f_r^2u_r^2 + g_r^2v_r^2)1_{u_rv_r = 0} \\
&\leq \frac{1}{4}(p_r^21_{u_rv_r \neq 0} + (p_r^2 + q_r^2)1_{u_rv_r = 0}),
\end{align*}
which implies \eqref{p-q}. \\
Conversely, let us now assume \eqref{p-q} holds. 

For constants $L, \eps >0$, considers functions 
\[ x_r := \frac{L(p_r-q_r) + p_r + q_r}{2u_r}1_{|u_r|\wedge|v_r| \geq \eps} + 1_{|u_r|\wedge|v_r| \leq \eps},\]
and 
\[ y_r := \frac{p_r + q_r -L(p_r-q_r)}{2v_r}1_{|u_r|\wedge|v_r| \geq \eps} + 1_{|u_r|\wedge|v_r| \leq \eps}.
\]
Clearly, $x,y \in L^2([0,T], \R)$. Since $C^1([0,T], \R)$ is dense in $L^2([0,T], \R)$, we can pick sequences $f^n, g^n \in C^1([0,T], \R)$ such that $f^n \to x$ and $g^n\to y$ in $L^2([0,T], \R)$. Since $u,v$ are bounded, it follows that 
\[f^nu+ g^nv \to (p+q)1_{|u|\wedge|v| \geq \eps}+ (u+v)1_{|u|\wedge|v| \leq \eps},\]
and 
\[f^nu- g^nv \to L(p-q)1_{|u|\wedge|v| \geq \eps}+ (u-v)1_{|u|\wedge|v| \leq \eps}\]
in $L^2([0,T], \R)$. This in turn implies that 
\begin{align*} 
\frac{1}{2}\int_0^T &(f^n_ru_r - g^n_rv_r)(p_r -q_r)dr \\ &\to \frac{L}{2}\int_0^T (p_r-q_r)^21_{|u_r|\wedge|v_r| \geq \eps}dr +  \frac{1}{2}\int_0^T (u_r-v_r)(p_r-q_r)1_{|u_r|\wedge|v_r| \leq \eps}dr,
\end{align*}
and
\[\frac{1}{2}\int_0^T (f^n_ru_r + g^n_rv_r)(p_r + q_r)dr - \int_0^T(f^n_ru_r + g^n_rv_r)^2dr \to c \]
where $c$ is independent of $L$. Note that sum of left hand sides of above two equations equals the integral appearing in \eqref{p-q}, which is bounded in $f,g$. This implies that $L\int_0^T (p_r-q_r)^21_{|u_r|\wedge|v_r| \geq \eps}dr$ is bounded. Since $L$ is arbitrary, this implies that 
\[ \int_0^T (p_r-q_r)^21_{|u_r|\wedge|v_r| \geq \eps}dr =0.\]
By letting $\eps \to 0+$, it follows using dominated convergence theorem that 
\[ \int_0^T (p_r-q_r)^21_{|u_r|\wedge|v_r| > 0}dr =0,\]
which concludes the proof.

%\begin{align*}
%\frac{1}{2}\int_0^T &(f^n_ru_r + g^n_rv_r)(p_r + q_r)dr - \int_0^T(f^n_ru_r + g^n_rv_r)^2dr \\
%&\to \frac{1}{2}\int_0^T ((p_r+q_r)1_{|u_r|\wedge|v_r| \geq \eps}+ (u_r+v_r)1_{|u_r|\wedge|v_r| \leq \eps})(p_r + q_r)dr - \int_0^T(f^n_ru_r + g^n_rv_r)^2dr
%\end{align*}
\end{proof}

\subsection{Proof of Proposition \ref{J=I}.} \label{JIProof}

Let us first assume $I(\varphi) <\infty$. Then, $\varphi = \varphi^h$ for some $h\in H_0^1([0,T],\R)$. Let $\sqrt{\varphi_t} = U_t + iV_t$. Since $\varphi$ solves \eqref{h-phi-eqn-2}, we have 
\begin{equation}
d(U_t^2-V_t^2+t)=2U_tdh_t, 
\end{equation}
and 
\begin{equation}
d(U_t V_t)= V_tdh_t.
\end{equation}
Following a simple rewriting, this implies that 
\begin{align*}
J_{f,g}(\varphi) & = \int_0^T (f_r U_r + g_rV_r)dh_r - \frac{1}{2}\int_0^T (f_rU_r+g_r V_r)^2dr \\ 
& = \int_0^T \bigl\{(f_r U_r + g_rV_r)\dot{h}_r - \frac{1}{2} (f_rU_r+g_r V_r)^2 \bigr\}dr  \\
&\leq \frac{1}{2}\int_0^T \dot{h}_r^2 dr = I(\varphi), 
\end{align*}
which implies $\sup_{f,g \in C^1([0,T], \R)} J_{f,g}(\varphi) \leq I(\varphi)<\infty$. Also, note that 
\[J_{0,g}(\varphi) =  \int_0^T g_rV_r\dot{h}_rdr - \frac{1}{2}\int_0^T g_r^2 V_r^2dr = -\frac{1}{2}\int_0^T(\dot{h}_r - g_rV_r)^2dr + I(\varphi). \]
Since $C^1([0,T], \R)$ is dense in $L^2([0,T], \R)$, 
\[\inf_{g\in C^1([0,T], \R)}\int_0^T(\dot{h}_r - g_rV_r)^2dr = \inf_{g\in L^2([0,T], \R)}\int_0^T(\dot{h}_r - g_rV_r)^2dr.\]
Also, since $\dot{h} \in L^2([0,T], \R)$ and $V$ is a strictly increasing positive function, 
\[\inf_{g\in L^2([0,T], \R)}\int_0^T(\dot{h}_r - g_rV_r)^2dr =0.\]
Hence, $\sup_{f,g \in C^1([0,T], \R)} J_{f,g}(\varphi) = I(\varphi).$ \\

Conversely, now assume that $\sup_{f,g \in C^1([0,T], \R)} J_{f,g}(\varphi) < \infty$. This implies that both $\sup_{f \in C^1([0,T], \R)} J_{f,0}(\varphi) < \infty$ and $\sup_{g \in C^1([0,T], \R)} J_{0,g}(\varphi) < \infty$. Using Lemma \ref{L2-lemma}, this implies that $Re(\varphi)$ and $Im(\varphi)$ are absolutely continuous functions. Furthermore, for some measurable functions $k,l:[0,T] \to \R$ such that 
\begin{equation}\label{need} 
\int_0^T k_r^2(|\varphi_r| + Re(\varphi_r))dr +  \int_0^T l_r^2(|\varphi_r| - Re(\varphi_r))dr <\infty,
\end{equation}
we have

\begin{equation}\label{k-1}
Re(\dot{\varphi}_t) + 1 = \frac{1}{2} k_t(|\varphi_t| + Re(\varphi_t)) \textrm{ a.e.}, 
\end{equation}
and 
\begin{equation}\label{l-1} 
Im(\dot{\varphi}_t) = \frac{1}{2} l_t(|\varphi_t| -Re(\varphi_t)) \textrm{ a.e.}. 
\end{equation}

Next, let $u_t + iv_t = \sqrt{\varphi_t}1_{\varphi_t \in \C\setminus [0,\infty)} + \sqrt{|\varphi_t|}1_{\varphi_t \in [0,\infty)}$. Note that $u_t+iv_t$ is a branch square root of $\varphi$. It follows that $|\varphi_t| = u_t^2 + v_t^2, Re(\varphi_t) = u_t^2 - v_t^2$, and $Im(\varphi_t) = 2u_tv_t$. Hence, $J_{f,g}(\varphi)$ can be written as 

\begin{equation} 
J_{f,g}(\varphi)  = \frac{1}{2}\int_0^T\{f_rk_ru_r^2 + g_rl_rv_r^2 - (f_ru_r + g_rv_r)^2 \}dr.
\end{equation}
Using Lemma \ref{2-var}, we obtain that $k_ru_r = l_rv_r$ a.e. on the set $\{uv\neq 0\}$. 
Now, \eqref{k-1},\eqref{l-1}, $Re(\dot{\varphi}_t) + 1 = k_tu_t^2$, $Im(\dot{\varphi}_t)= l_tv_t^2$, which implies that 
\begin{align*}
\varphi_t &= -t + \int_0^t (k_ru_r^2 + il_rv_r^2)dr \\
&= -t + \int_0^t (u_r + iv_r)(k_ru_r1_{u_rv_r \neq 0} + l_rv_r1_{u_r=0, v_r \neq 0} + k_ru_r1_{u_r\neq 0, v_r = 0} )dr.
\end{align*}
Therefore, $\varphi$ solves \eqref{h-phi-eqn} with $A_t = u_t + iv_t$ and 
\[h_t = \frac{1}{2}\int_0^t (k_ru_r1_{u_rv_r \neq 0} + l_rv_r1_{u_r=0, v_r \neq 0} + k_ru_r1_{u_r\neq 0, v_r = 0} )dr.\]
Note that by \eqref{need}, $h\in H_0^1([0,T], \R)$. Hence, using Lemma \ref{unique}, $\varphi_t \in \C\setminus [0,\infty)$ for all $t>0$ and $\varphi = \varphi^h$. Hence $I(\varphi) <\infty$ and \eqref{J=I} follows from the previous case.

\subsection{Lower bound.}
We now prove the LDP lower bound in Theorem \ref{main-thm}. Let $C_0^2([0,T], \R)$ be the space of continuously twice differentiable $h:[0,T]\to \R$ with $h_0 =0$ and $Y := \{ \varphi^h \textrm{ }\bigl |\textrm{ } h\in C_0^2([0,T],\R)\}$. It follows using denseness of $C_0^2([0,T],\R)$ in $H_0^1([0,T],\R)$ and Lemma \ref{cont-BV} that for each $\varphi$ with $I(\varphi) <\infty$, there exists a sequence $\varphi_n \in Y$ such that $\varphi_n \to \varphi$ uniformly and $I(\varphi_n) \to I(\varphi)$. Thus, it suffices to prove the following to obtain the LDP lower bound for $Z^\eps$.

\begin{proposition}\label{LB}
For any $\varphi \in Y = \{ \varphi^h \textrm{ }\bigl |\textrm{ } h\in C_0^2([0,T],\R)\}$,
\begin{equation}\label{wLB}
\lim_{r \to 0}\liminf\limits_{\epsilon \to 0} \epsilon^2 \log\mathbb P(Z^\epsilon \in B_r(\varphi) )\ge-I(\varphi).
\end{equation}
\end{proposition}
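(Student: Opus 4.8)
\textbf{Proof proposal for Proposition \ref{LB}.}
The plan is to use the classical Cameron--Martin change of measure to tilt the driving Brownian motion by $h$, and then estimate the cost of staying in $B_r(\varphi^h)$ under the tilted law. Fix $\varphi = \varphi^h$ with $h \in C_0^2([0,T],\R)$. Under the measure $\mathbb Q^\eps$ defined by
\begin{equation}\label{our-choice-2}
\frac{d\mathbb Q^\eps}{d\mathbb P} = \mathcal E\biggl(\frac{1}{\eps}\int_0^T \dot h_r\, dB_r\biggr) = \exp\biggl(\frac{1}{\eps}\int_0^T \dot h_r\, dB_r - \frac{1}{2\eps^2}\int_0^T \dot h_r^2\, dr\biggr),
\end{equation}
Girsanov's theorem tells us that $\tilde B_t := B_t - \frac{1}{\eps}\int_0^t \dot h_r\, dr$ is a $\mathbb Q^\eps$-Brownian motion, and consequently the law of $Z^\eps$ under $\mathbb Q^\eps$ coincides with the law of $Z^{\eps,h}$ (the solution of \eqref{Z-h-eqn}) under $\mathbb P$. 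Writing $\mathbf 1_{B_r(\varphi)}(Z^\eps)$ and dividing and multiplying by the Radon--Nikodym derivative, we get the standard lower bound
\[
\eps^2 \log \mathbb P(Z^\eps \in B_r(\varphi)) \ge \eps^2 \log \mathbb Q^\eps(Z^\eps \in B_r(\varphi)) - \frac{1}{2}\int_0^T \dot h_r^2\, dr - (\text{error terms}),
\]
where the error terms come from controlling $\mathbb E_{\mathbb Q^\eps}\!\big[\tfrac1\eps\int_0^T\dot h_r\,dB_r\big]$ on the event in question; by Cauchy--Schwarz / Chebyshev these are handled once one knows $\mathbb Q^\eps(Z^\eps \in B_r(\varphi)) \to 1$.

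So the heart of the matter reduces to showing that, for every $r>0$,
\[
\liminf_{\eps\to 0} \mathbb Q^\eps\big(Z^\eps \in B_r(\varphi^h)\big) > 0,
\]
equivalently $\mathbb P(\|Z^{\eps,h} - \varphi^h\|_\infty < r) \to 1$ as $\eps \to 0$, i.e. $Z^{\eps,h} \to \varphi^h$ in probability in the uniform topology. This is exactly the content of the ``Proposition \ref{req-conv}'' referred to in the introduction, and it is where the real work lies. The approach I would take: first use the exponential tightness estimate for $\{Z^{\eps,h}\}_{\eps\in(0,1)}$ from Proposition \ref{p2} to get tightness of this family in $C_0([0,T],\C)$ (indeed in $C^\alpha$ for $\alpha<1/2$), so that along any subsequence we may extract a further subsequence converging in law, and in fact—since the noise coefficient $\eps$ vanishes—converging a.s.\ along a suitable coupling to some limit $Z^{0,h}$. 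Next, pass to the limit in the SDE \eqref{Z-h-eqn}: the martingale term $2\eps\int_0^t \sqrt{Z^{\eps,h}_r}\,dB_r$ has quadratic variation $4\eps^2\int_0^t|\sqrt{Z^{\eps,h}_r}|^2\,dr \le 4\eps^2 \int_0^t |Z^{\eps,h}_r|\,dr \to 0$ (using the a priori bounds of Lemma \ref{U-V-bound}), so it vanishes uniformly on $[0,T]$ in probability. Meanwhile the drift-type term $2\int_0^t \sqrt{Z^{\eps,h}_r}\,dh_r$ is continuous in $Z$ along the convergent subsequence provided one can identify $\sqrt{Z^{\eps,h}}$ with a branch square root of the limit; here Lemma \ref{bsr-exist} applies (each $Z^{\eps,h}_t \in \C\setminus[0,\infty)$ for $t>0$ by the Girsanov-transferred a.s.\ property), giving a subsequence along which $\sqrt{Z^{\eps,h}}$ converges uniformly to a branch square root $A$ of the limit $Z^{0,h}$. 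Passing to the limit, $Z^{0,h}$ solves \eqref{h-phi-eqn} with this branch square root $A$, i.e.\ $dZ^{0,h}_t = -dt + 2A_t\,dh_t$.

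The final—and decisive—step is the \emph{uniqueness} input: Lemma \ref{unique} guarantees that \eqref{h-phi-eqn} (for $h\in H_0^1$, a fortiori for $h\in C_0^2$) has a unique solution, which is $\varphi^h$. Hence every subsequential limit of $Z^{\eps,h}$ equals $\varphi^h$, so the whole family converges to $\varphi^h$ uniformly in probability, which is what we needed. I expect this uniqueness step, together with the delicate identification of the limit of $\sqrt{Z^{\eps,h}}$ as a \emph{branch} square root (so that the limiting equation is genuinely of the form \eqref{h-phi-eqn} and Lemma \ref{unique} is applicable), to be the main obstacle: one must be careful that the limit process does not spend positive-measure time on $[0,\infty)$ in a way that would make the square-root coefficient ambiguous, and it is precisely for this that we invoke the $\C\setminus[0,\infty)$-valued property plus Lemma \ref{bsr-exist} and Lemma \ref{unique} from \cite{STW-1}. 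Once $Z^{\eps,h}\to\varphi^h$ in probability is established, assembling the lower bound \eqref{wLB} from the change-of-measure inequality above is routine: one chooses $\eps$ small enough that $\mathbb Q^\eps(Z^\eps\in B_r(\varphi))\ge 1/2$, bounds the exponential-tilt error using uniform integrability of the Radon--Nikodym derivatives, and lets $\eps\to 0$ then $r\to 0$ to recover $-I(\varphi) = -\tfrac12\int_0^T\dot h_r^2\,dr$.
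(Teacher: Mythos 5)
Your proposal is correct and follows essentially the same route as the paper: the Cameron--Martin tilt by $h$, identification of the law of $Z^\eps$ under the tilted measure with that of $Z^{\eps,h}$, reduction to the convergence $Z^{\eps,h}\to\varphi^h$ in probability, and a proof of that convergence via tightness, Skorokhod representation, Lemma \ref{bsr-exist} and the uniqueness Lemma \ref{unique}, exactly as in Proposition \ref{req-conv}. The only (harmless) difference is in controlling the stochastic-integral term of the tilt: you suggest a Jensen/Cauchy--Schwarz bound, whereas the paper exploits $h\in C_0^2$ to write $\int_0^T\dot h_r\,dB_r$ by parts as a quantity bounded by $C\|B\|_\infty$ and restricts to the event $\{\|B\|_\infty\le 1\}$.
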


The key ingredient in the proof of above claim is the following observation: 

\begin{proposition}\label{req-conv}
Let $h \in H_0^1([0,T], \R)$ and $Z^{\eps,h}, \varphi^h$ be as described in Section \ref{prelim}. Then, as $\eps \to 0+$, 
\begin{equation}\label{key-step}
 Z^{\eps, h} \overset{\P}{\longrightarrow} \varphi^h.
 \end{equation}
\end{proposition}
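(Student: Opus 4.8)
\textbf{Proof proposal for Proposition \ref{req-conv}.}

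The plan is to establish the convergence $Z^{\eps,h}\to\varphi^h$ in probability by a compactness-plus-identification argument, using the uniform estimates of Lemma \ref{U-V-bound} together with the uniqueness statement of Lemma \ref{unique}. First I would fix $h\in H_0^1([0,T],\R)$ and record that by Proposition \ref{p2} the family $\{Z^{\eps,h}\}_{\eps\in(0,1)}$ is tight in $C_0([0,T],\C)$ (in the $\alpha$-H\"older topology for $\alpha<1/2$, hence also in the uniform topology). Consequently, to prove \eqref{key-step} it suffices to show that every subsequential limit in distribution equals $\varphi^h$, since then the full family converges in distribution to the constant $\varphi^h$, and convergence in distribution to a constant is equivalent to convergence in probability.

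Next, I would take an arbitrary subsequence $\eps_n\to 0$ along which $Z^{\eps_n,h}$ converges in distribution to some limit $\Phi$; by Skorokhod's representation theorem I may assume the convergence is almost sure and uniform on a common probability space, with $Z^{\eps_n,h}\to\Phi$ uniformly. Since $Z^{\eps_n,h}_t\in\C\setminus[0,\infty)$ for all $t>0$ almost surely (this is the statement recalled after \eqref{Z-h-eqn} that $Z^{\eps,h}$ has the same almost sure properties as $Z^\eps$, which is covered by the result of \cite{MSY}), Lemma \ref{bsr-exist} applies: along a further subsequence, $\sqrt{Z^{\eps_n,h}}=U^{\eps_n,h}+iV^{\eps_n,h}$ converges uniformly to a branch square root $A(\Phi)$ of $\Phi$. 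Now I pass to the limit in the integral form of \eqref{Z-h-eqn},
\[
Z^{\eps_n,h}_t = -t + 2\int_0^t \sqrt{Z^{\eps_n,h}_r}\,dh_r + 2\eps_n\int_0^t \sqrt{Z^{\eps_n,h}_r}\,dB_r .
\]
The drift-type term $\int_0^t\sqrt{Z^{\eps_n,h}_r}\,dh_r$ is a Riemann–Stieltjes integral against the bounded-variation function $h$, so uniform convergence of the integrands forces $\int_0^t\sqrt{Z^{\eps_n,h}_r}\,dh_r\to\int_0^t A(\Phi)_r\,dh_r$ uniformly in $t$. For the martingale term, Lemma \ref{U-V-bound} gives the deterministic-in-$\eps$ bound $|U^{\eps_n,h}_r|\le 2\sup_{s\le T}(\eps_n|B_s|+|h_s|)$ and $V^{\eps_n,h}_r\le\sqrt{(\eps_n^2+1)T}$, so $\eps_n\int_0^\cdot\sqrt{Z^{\eps_n,h}_r}\,dB_r$ has quadratic variation bounded by $\eps_n^2\cdot C(1+\sup_{s\le T}B_s^2)\to 0$; by Doob's maximal inequality this term converges to $0$ uniformly in probability (one has to be slightly careful on the Skorokhod space, but the Brownian motions can be taken fixed and only $\eps_n\to 0$ matters). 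Taking $n\to\infty$ yields $\Phi_t=-t+2\int_0^t A(\Phi)_r\,dh_r$, i.e.\ $(\Phi,A(\Phi))$ solves \eqref{h-phi-eqn}. By the uniqueness part of Lemma \ref{unique}, $\Phi=\varphi^h$ almost surely. Since every subsequential limit is the deterministic $\varphi^h$, we conclude $Z^{\eps,h}\xrightarrow{\P}\varphi^h$.

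The main obstacle I anticipate is the treatment of the $\eps_n$-martingale term after invoking Skorokhod representation: one must argue that passing to a new probability space does not destroy the $\eps_n\to 0$ vanishing of $2\eps_n\int_0^\cdot\sqrt{Z^{\eps_n,h}_r}\,dB_r$. The cleanest route is to avoid this entirely by \emph{not} using Skorokhod on the martingale part: instead, note that by Lemma \ref{U-V-bound} the process $R^{\eps,h}_t:=Z^{\eps,h}_t+t-2\int_0^t\sqrt{Z^{\eps,h}_r}\,dh_r = 2\eps\int_0^t\sqrt{Z^{\eps,h}_r}\,dB_r$ satisfies $\E\big[\sup_{t\le T}|R^{\eps,h}_t|^2\big]\le C\eps^2\,\E\big[\int_0^T(\eps^2+1)T + 4\sup_{s\le T}(\eps|B_s|+|h_s|)^2\,dr\big]\to 0$ by Doob and Fubini, so $R^{\eps,h}\to 0$ in $L^2$ and hence in probability, \emph{directly on the original space}. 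Then one only needs the pathwise/weak compactness argument above to identify the limit of $Z^{\eps,h}+\cdot-R^{\eps,h}$, i.e.\ of $-\cdot+2\int_0^\cdot\sqrt{Z^{\eps,h}_r}\,dh_r$; combined with Lemma \ref{cont-BV}-type stability this pins the limit down as $\varphi^h$. A secondary technical point is ensuring the branch square root extracted from Lemma \ref{bsr-exist} is the one entering \eqref{h-phi-eqn}, but Lemma \ref{unique} precisely removes any ambiguity by forcing $A_t=\sqrt{\varphi^h_t}$ for $t>0$.
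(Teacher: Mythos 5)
Your proposal is correct and follows essentially the same route as the paper: kill the martingale term $2\eps\int_0^\cdot\sqrt{Z^{\eps,h}_r}\,dB_r$ directly on the original space via Lemma \ref{U-V-bound}, then use tightness, Skorokhod representation, Lemma \ref{bsr-exist} to extract a branch square root along a subsequence, and the uniqueness in Lemma \ref{unique} to identify every subsequential limit as the deterministic $\varphi^h$, which upgrades convergence in law to convergence in probability. In particular, the ``cleanest route'' you describe at the end (handling the stochastic integral before invoking Skorokhod, so that only the Riemann--Stieltjes part needs a limiting argument) is precisely the order of operations in the paper's proof.
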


The proof of Proposition \ref{req-conv} is postponed till next section. As a result of this, we have:

\begin{proof}[Proof of Proposition \ref{LB}]
Let $\varphi = \varphi^h$ for some $h\in C^2([0,T],\R)$. We introduce a change of measure 
$$\frac{d \mathbb Q}{d \mathbb P}=N^\epsilon$$ where 
\begin{equation}\label{our-choice-2}
N^\epsilon=\exp\left(\frac{1}{\epsilon}\int_0^T \dot h_r dB_r - \frac{1}{2\eps^2} \int_0^T \dot{h}_r^2 dr \right).
\end{equation}

By Girsanov theorem, $B _t - h_t/\eps$ is a standard Brownian motion under $\mathbb{Q}$. Also, using integration by parts, 
\[ \int_0^T \dot h_r dB_r = \dot h_T B_T - \int_0^T B_r\ddot h dr \leq C||B||_\infty\] 
for some constant $C$ depending only on $h$. Therefore, 

\begin{align*}
\mathbb P(Z^\epsilon \in B_r(\varphi)) = \mathbb E \left(1_{B_r(\varphi)}(Z^\epsilon)\frac{N^\epsilon}{N^\epsilon} \right)  &= \mathbb E^\Q \left(1_{B_r(\varphi)}(Z^\epsilon)\exp\left(-\frac{1}{\epsilon}\int_0^T \dot h_rdB_r+\frac{1}{2\epsilon^2}\int_0^T {\dot h}_r^2dr\right) \right)\\
& = \mathbb E\left(1_{B_r(\varphi)}(Z^{\epsilon,h}) \exp\left(-\frac{1}{\epsilon}\int_0^T \dot h_rdB_r -\frac{1}{2\epsilon^2}\int_0^T {\dot h}_r^2dr\right) \right) \\
& \geq \mathbb P\left( Z^{\eps,h} \in B_r(\varphi), ||B||_\infty\leq 1 \right) e^{-\frac{C}{\epsilon}}\exp\left(-\frac{1}{2\epsilon^2}\int_0^T {\dot h}_r^2dr\right).
\end{align*}
Using Proposition \ref{req-conv}, $\mathbb P\left( Z^{\eps,h} \in B_r(\varphi), ||B||_\infty\leq 1 \right)  \to \mathbb P\left(||B||_\infty\leq 1 \right)  >0$. Therefore,
\begin{align}
\lim_{r \to 0}\liminf\limits_{\epsilon \to 0} \epsilon^2 \log\mathbb P(Z^\epsilon \in B_r(\varphi))\ge-I(\varphi).
\end{align}

\end{proof}

\subsection{Proof of Proposition \ref{req-conv}.}

Using Lemma \ref{U-V-bound}, it can be easily seen that as $\eps \to 0+$
\[ \eps \int_0^{\cdot} \sqrt{Z_r^{\eps,h}}dB_r \overset{\P}{\longrightarrow} 0.\]
Since $Z^{\eps, h}$ solve \eqref{Z-h-eqn}, we get that 
\begin{equation}\label{Z-to-0}
Z_t ^{\eps, h} + t + 2 \int_0^t  \sqrt{Z_r^{\eps,h}}dh_r  \overset{\P}{\longrightarrow} 0.
\end{equation}

Now, let $\eps_n \to 0+$ be any sequence. Let us write $Z^n_t =  Z^{\eps_n, h}$. Using the tightness of $Z^{\eps, h}$ (Proposition \ref{p2}), we get that along a subsequence $\eps_{n_k}$, $Z^{n_k} \overset{d}{\to} \varphi,$ where $\varphi$ is some $C_0([0,T], \C)$-valued random variable. Using Skorokhod's representation theorem, there exists $C_0([0,T], \C)$-valued random variables $Y^k$ and $\Psi$ such that $Y^k \overset{d}{=} Z^{n_k} $, $\Psi \overset{d}{=} \varphi$, and $Y^k \to \Psi$ almost surely. Clearly, \eqref{Z-to-0} implies that 
\begin{equation}\label{Y-to-0}
Y_t^k + t + 2 \int_0^t  \sqrt{Y_r^k}dh_r  \overset{\P}{\longrightarrow} 0.
\end{equation}
Next, using Lemma \ref{bsr-exist}, possibly along a subsequence, $\sqrt{Y^k}$ converges uniformly to a branch square root $A_t = A_t(\Psi)$. Therefore, it follows by taking $k\to \infty$ in the above that 
\[ \Psi_t + t + 2 \int_0^t  A_r dh_r  =0 \textrm{ a.s.}.\]
Using Lemma \ref{unique}, this implies that $\Psi = \varphi^h \textrm{ a.s.}$. Hence, $\varphi =  \varphi^h \textrm{ a.s.}. $ Since $\varphi^h$ is deterministic, it follows that $Z^{n_k} \overset{\P}{\longrightarrow} \varphi^h$. Since the limiting object $\varphi^h$ is the same for any sequence $\eps_n\to 0+$, the \eqref{key-step} follows. 

\begin{remark} \label{good-remark}
The Proposition \ref{req-conv} is similar in spirit to continuity of Loewner traces with respect to perturbations in the driving function. This in general is a delicate and difficult problem. However, since we only need convergence in probability in \eqref{key-step}, we get around this difficulty by relying on the uniqueness of solution to \eqref{h-phi-eqn}.
\end{remark}

\bibliographystyle{alpha}
\bibliographystyle{acm}	% I LIKE THIS ONE (NUMBERS!!)
\bibliography {LDP}

\end{document}